\newtheorem*{theorem*}{Theorem}
\newtheorem{theorem}{Theorem}[section]
\newtheorem{corollary}[theorem]{Corollary}
\newtheorem{problem}[theorem]{Problem}
\newtheorem{lemma}[theorem]{Lemma}
\theoremstyle{definition}
\newtheorem{definition}[theorem]{Definition}
\newtheorem{example}[theorem]{Example}
\DeclareMathOperator{\Id}{\mathrm{Id}}
\DeclareMathOperator{\rev}{\mathrm{rev}}
\newcommand{\NEpath}[4]{
    \fill[white!25]  (#1) rectangle +(#2,#3);
    \fill[fill=white]
    (#1)
    \foreach \dir in {#4}{
        \ifnum\dir=0
        -- ++(1,0)
        \else
        -- ++(0,1)
        \fi
    } |- (#1);
    \draw[help lines] (#1) grid +(#2,#3);
    \draw[dashed] (#1) -- +(#3,#3);
    \coordinate (prev) at (#1);
    \foreach \dir in {#4}{
        \ifnum\dir=0
        \coordinate (dep) at (1,0);
        \else
        \coordinate (dep) at (0,1);
        \fi
        \draw[line width=2pt,-stealth] (prev) -- ++(dep) coordinate (prev);
    };
}
\newcommand{\drawpermutate}[1]{%
    \begin{tikzpicture}
        \foreach [evaluate=\i as \x using int(\i-1)]\i in {0,1,...,8}
        {
            \foreach [evaluate=\j as \y using int(\j-1)] \j in {0,1,...,8}
            {
                \node at (\i,\j)[name=perm-\x-\y,]{};
            }
        }

        \foreach \i in {1,...,6}
        {
            \node at (perm-\i-0.center){\i};
            \node at (perm-0-\i.center){\i};
        }

        \draw ([xshift=-5mm]perm-1--1.south west)--([xshift=-5mm]perm-1-7.north west);
        \draw ([yshift=5mm]perm--1-0.south west)--([yshift=5mm]perm-7-0.south east);

        \foreach \mystyle/\coords/\leftorright in {#1}
        {
            \foreach \x/\y in \coords
            {
                \node[circle,fill=\mystyle,draw=\mystyle] at (perm-\x-\y){};
            }

        }

    \end{tikzpicture}
}
\newcommand{\drawperms}[1]{%
    \begin{tikzpicture}
        \foreach [evaluate=\i as \x using int(\i-1)]\i in {0,1,...,8}
        {
            \foreach [evaluate=\j as \y using int(\j-1)] \j in {0,1,...,8}
            {
                \node at (\i,\j)[name=perm-\x-\y,]{};
            }
        }

        \draw ([xshift=-5mm]perm-1--1.south west)--([xshift=-5mm]perm-1-7.north west);
        \draw ([yshift=5mm]perm--1-0.south west)--([yshift=5mm]perm-7-0.south east);

        \foreach \mystyle/\coords/\leftorright in {#1}
        {
            \foreach \x/\y in \coords
            {
                \node[circle,fill=\mystyle,draw=\mystyle] at (perm-\x-\y){};
            }

        }

    \end{tikzpicture}
}
\title{A Complete Enumeration of Ballot Permutations Avoiding Sets of Small Patterns}
\author{Nathan Sun} \address{Department of Applied Mathematics, Harvard University, Cambridge, MA 02138} \email{nsun@college.harvard.edu}
\date{June 26, 2022}
\subjclass[2020]{Primary: 05A05}
\keywords{Permutations, ballot permutations, pattern avoidance, Dyck paths}
\begin{document}
\maketitle

\begin{abstract}
Permutations whose prefixes contain at least as many ascents as descents are called ballot permutations. Lin, Wang, and Zhao have previously enumerated ballot permutations avoiding small patterns and have proposed the problem of enumerating ballot permutations avoiding a pair of permutations of length $3$. We completely enumerate ballot permutations avoiding two patterns of length $3$ and we relate these avoidance classes with their respective recurrence relations and formulas, which leads to an interesting bijection between ballot permutations avoiding $132$ and $312$ with left factors of Dyck paths. In addition, we also conclude the Wilf-classification of ballot permutations avoiding sets of two patterns of length $3$, and we then extend our results to completely enumerate ballot permutations avoiding three patterns of length $3$.
\end{abstract}

\section{Introduction}

The distribution of descents over permutations has been thoroughly researched and has several important combinatorial properties. Specifically, the Eulerian polynomials $A_n(t)$ encapsulate information about the number of descents in every permutation in $S_n$, and $q$-analogues defined using additional permutation statistics have been considered by Agrawal, Choi, and Sun \cite{agrawal2020permutation}, Carlitz \cite{carlitz1954q}, and Foata and Sch{\"u}tzenberger \cite{foata1978major}. In particular, the Eulerian polynomials can also be equivalently defined using the excedance permutation statistic. Spiro \cite{spiro2020ballot} introduced a variation of this in his work on ballot permutations, of which we will now give a brief history.

The following ballot problem was first introduced by Bertrand \cite{bertrand} in 1887 for the case $\lambda = 1$. 
\begin{problem}
Suppose in an election, candidate A receives $a$ votes and candidate B receives $b$ votes, where $a \geq \lambda b$ for some positive integer $\lambda$. How many ways can the ballots in the election be ordered such that candidate A maintains more than $\lambda$ times as many votes as candidate B throughout the counting of the ballots?
\end{problem}

Almost immediately after Bertrand introduced the ballot problem, André \cite{andre} introduced ballot sequences in a combinatorial solution, and more recently, Goulden and Serrano \cite{goulden} provided a solution to the case where $\lambda>1$ using a variation of ballot sequences. However, the most famous variation of ballot sequences is ballot permutations, which represent each vote for candidate A and candidate B via an ascent and a descent in the permutation, respectively. Ballot permutations have been studied by Spiro \cite{spiro2020ballot}, Bernardi, Duplantier, and Nadeau \cite{bernardi2010bijection}, and Lin, Wang, and Zhao \cite{lin2022decomposition}. In particular, Bernardi, Duplantier, and Nadeau \cite{bernardi2010bijection} proved that the set of ballot permutations with length $n$ are equinumerous to the set of odd order permutations with the same length. Spiro \cite{spiro2020ballot} introduced a variation of excedence numbers, whose distribution over the set of odd order permutations is the same as the distribution of the descent numbers over the set of ballot permutations.

In an extension to Spiro's \cite{spiro2020ballot} work, Lin, Wang, and Zhao \cite{lin2022decomposition} constructed an explicit bijection between these two sets of permutations, which can be extended to positive well-labeled paths and proves a conjecture due to Spiro \cite{spiro2020ballot} using the statistic of peak values. Lin, Wang, and Zhao \cite{lin2022decomposition} also established a connection between $213$-avoiding ballot permutations and Gessel walks and initiated the enumeration of ballot permutations avoiding a single pattern of length $3$. They have also suggested the problem of enumerating ballot permutations avoiding pairs of permutations of length $3$, on which we will now present two main results. We first completely enumerate ballot permutations avoiding two patterns of length $3$ and prove their respective recurrence relations and formulas. In doing this, we characterize the set of ballot permutations avoiding sets of patterns. We then show a bijection between $132$- and $213$-avoiding ballot permutations with left factors of Dyck paths and establish all  Wilf-equivalences between patterns. We finally initiate and completely enumerate ballot permutations avoiding three patterns of length $3$. 

This paper is organized as follows. In Section 2, we introduce preliminary definitions and notation. In Section 3, we completely enumerate ballot permutations avoiding two patterns of length $3$ and prove their respective recurrence relations and formulas. In addition, we prove Wilf-equivalences of patterns and show a bijection to left factors of Dyck paths. In Section 4, we extend our enumeration to ballot permutations avoiding three patterns of length $3$. In Section 5, we conclude with open problems and further directions.

\section{Preliminaries} \label{sec:preliminaries}
The following notation is borrowed from \cite{sun2022d}. We write $S_n$ to denote the set of permutations of $[n] = \{ 1,2, \dots, n \}$. Note that we can represent each permutation $\sigma \in S_n$ as a sequence $\sigma(1) \cdots \sigma(n)$. Further, let $\mathrm{Id}_n$ denote the identity permutation $12 \cdots n$ of size $n$ and given a permutation $\sigma \in S_n$, let $\rev(\sigma)$ denote the reverse permutation $\sigma(n) \sigma(n-1) \cdots \sigma(1)$. We further say that a sequence $w$ is \emph{consecutively increasing} (respectively \emph{decreasing}) if for every index $i$, $w(i+1) = w(i)+1$ (respectively $w(i+1) = w(i)-1$). 

For a sequence $w = w(1) \cdots w(n)$ with distinct real values, the \emph{standardization} of $w$ is the unique permutation with the same relative order. Note that once standardized, a consecutively-increasing sequence is the identity permutation and a consecutively-decreasing sequence is the reverse identity permutation. Moreover, we say that in a permutation $\sigma$, the elements $\sigma(i)$ and $\sigma(i+1)$ are \emph{adjacent} to each other. More specifically, $\sigma(i)$ is \emph{left-adjacent} to $\sigma(i+1)$ and similarly, the element $\sigma(i+1)$ is $\emph{right-adjacent}$ to $\sigma(i)$. The definitions in this section are taken from \cite{lin2022decomposition}.

\begin{definition}
A \emph{prefix} of a permutation $\sigma$ is a contiguous initial subsequence $\sigma(1) \cdots \sigma(p)$ for some $p$.
\end{definition}

\begin{definition}
Given a permutation $\sigma \in S_n$, we say that $i \in [n-1]$ is a \emph{descent} of $\sigma$ if $\sigma(i)>\sigma(i+1)$. Similarly, we say that $i \in [n-1]$ is an \emph{ascent} of $\sigma$ if $\sigma(i) < \sigma(i+1)$.
\end{definition}

\begin{definition}
A \emph{ballot permutation} is a permutation $\sigma$ such that any prefix of $\sigma$ has at least as many ascents as descents.
\end{definition}

We let $B_n$ to denote the set of all ballot permutations of length $n$. It is interesting to consider the notion of pattern avoidance on ballot permutations, which we will now introduce.

\begin{definition}
We say that the permutation $\sigma$ \emph{contains} the permutation $\pi$ if there exists indices $c_1 < \dots < c_k$ such that $\sigma(c_1) \cdots \sigma(c_k)$ is order-isomorphic to $\pi$. We say that a permutation \emph{avoids} a pattern $\pi$ if it does not contain it. 
\end{definition}

Given patterns $\pi_1, \dots, \pi_m$, we let $B_n(\pi_1, \dots, \pi_m)$ to denote the set of all ballot permutations of length $n$ that avoid the patterns $\pi_1, \dots, \pi_n$. 

\begin{definition}
We say that two sets of patterns $\pi_1, \dots, \pi_k$ and $\tau_1, \dots, \tau_\ell$ are \emph{Wilf-equivalent} if $|S_n(\pi_1, \dots, \pi_k)| = |S_n(\tau_1, \dots, \tau_\ell)|.$ In the context of ballot permutations, we say that these two sets of patterns are Wilf-equivalent if $|B_n(\pi_1, \dots, \pi_k)| = |B_n(\tau_1, \dots, \tau_\ell)|.$
\end{definition}

To characterize permutations, we will now define the direct sum and the skew sum of permutations.

\begin{definition}
Let $\sigma$ be a permutation of length $n$ and $\pi$ be a permutation of length $m$. Then the \emph{skew sum} of $\sigma$ and $\pi$, denoted $\sigma \ominus \pi$, is defined by \begin{equation*}
    \sigma \ominus \pi (i) = \begin{cases}
\sigma(i) + m & 1 \leq i \leq n \\
\pi(i-n) & n+1 \leq i \leq m+n.
\end{cases}
\end{equation*}
\end{definition}

\begin{example}\label{skewsum}
As illustrated in Figure \ref{box}, $$132 \ominus 123 = 465123.$$
\end{example}

\begin{figure}[htp]
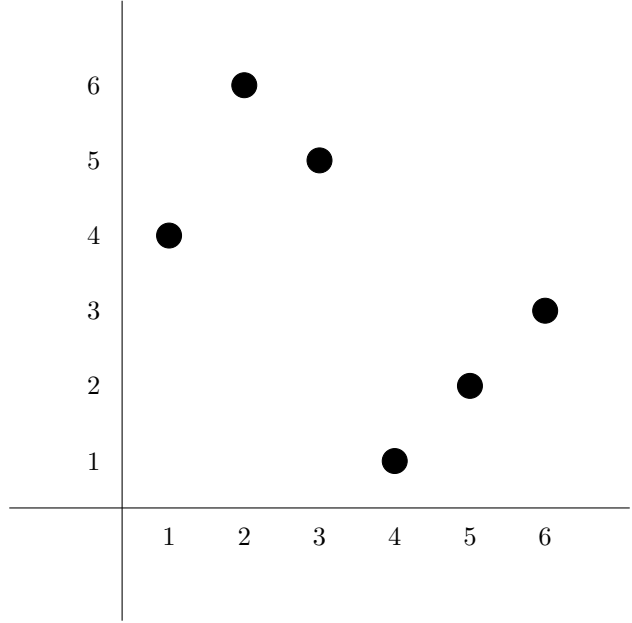

    \centering
    \drawpermutate{black/{1/4,3/5,5/2}/left,black/{4/1,2/6,6/3}/right}
    \caption{The graph of the skew sum described in Example \ref{skewsum}.}
    \label{box}
\end{figure}

\begin{definition}
Let $\sigma$ be a permutation of length $n$ and $\pi$ be a permutation of length $m$. Then the \emph{direct sum} of $\sigma$ and $\pi$, denoted $\sigma \oplus \pi$, is defined by \begin{equation*}
    \sigma \oplus \pi (i) = \begin{cases}
\sigma(i) & 1 \leq i \leq n \\
\pi(i-n)+n & n+1 \leq i \leq m+n.
\end{cases}
\end{equation*}
\end{definition}

\begin{example}\label{directsum}
As illustrated in Figure \ref{box2}, $$132 \oplus 123 = 132456.$$
\end{example}

\begin{figure}[htp]
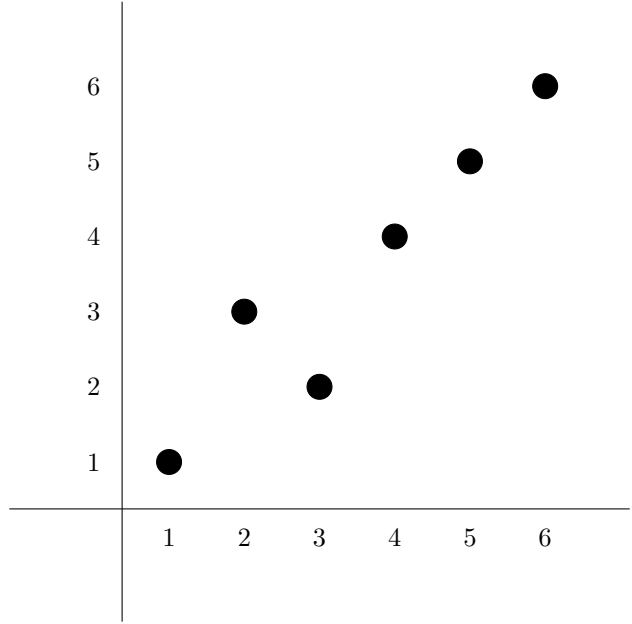

    \centering
    \drawpermutate{black/{1/1,3/2,5/5}/left,black/{4/4,2/3,6/6}/right}
    \caption{The graph of the direct sum described in Example \ref{directsum}.}
    \label{box2}
\end{figure}

\section{Enumeration of Pattern Avoidance Classes of length 2}
\label{sec:enumeration}

Lin, Wang, and Zhao \cite{lin2022decomposition} have enumerated sequences of ballot permutations avoiding small patterns. They provide the following Table \ref{double avoidance}:

\begin{table}[htp]
    \centering
    \begin{tabular}{|c | c | c | c|} 
 \hline
 Patterns & Sequence & OEIS Sequence & Comment \\ [0.5ex] 
 \hline\hline
 $123$ & $1,1,2,2,5,5,14,14, \dots$  & \href{http://oeis.org/A208355}{A208355} & Catalan number $C(\lceil \frac{n}{2} \rceil)$ \\ 
 \hline
 $132$ & $1,1,2,4,10,25,70, \dots$ & \href{https://oeis.org/A005817}{A005817} & $C(\lceil \frac{n}{2} \rceil)C(\lceil \frac{n+1}{2} \rceil)$ \\
 \hline
 $213$ & $1,1,3,6,21,52,193, \dots$ & \href{https://oeis.org/A151396}{A151396} & Gessel walks ending on the $y$-axis \\
 \hline
 $231$ & $1,1,2,4,10,25,70, \dots$ & \href{https://oeis.org/A005817}{A005817} & Wilf-equivalent to pattern $132$ \\
 \hline
 $312$ & $1,1,3,6,21,52,193, \dots$ & \href{https://oeis.org/A151396}{A151396} & Wilf-equivalent to pattern $213$ \\
 \hline
 $321$ & $1,1,3,9,28,90,297, \dots$ & \href{https://oeis.org/A071724}{A071724} & $\frac{3}{n+1} {{2n-2} \choose {n-2}}$ for $n >1$ \\
 \hline
\end{tabular}
    \caption{Sequences of ballot permutations avoiding one pattern of length $3$.}
    \label{double avoidance}
\end{table}

We extend Lin, Wang, and Zhao's \cite{lin2022decomposition} results to enumerate ballot permutations avoiding two patterns of length $3$. Table \ref{double avoidance} presents the sequence of ballot permutations avoiding two patterns of length $3$.

\begin{table}[htp]
    \centering
    \begin{tabular}{|c | c | c | c | c|} 
 \hline
 Patterns & Sequence & OEIS Sequence & Comment \\ [0.5ex] 
 \hline\hline
 $123, 132$ & $1,1,1,1,1,1,1 \dots$  & & Sequence of all $1$s; Theorem \ref{123,132} \\
 \hline
 $123, 213$ & $1,1,2,1,2,1,2, \dots$ & & Excluding $n=1$; Theorem \ref{123,213}  \\
 \hline
 $123, 231$ & $1,1,1,0,0,0,0, \dots$ & & Terminates after $n=3$  \\
 \hline
 $123, 312$ & $1,1, 2,0,0,0,0, \dots$ & & Terminates after $n=3$ \\
 \hline
 $123, 321$ & $1,1,2,2,0,0,0, \dots$ &  & Terminates after $n=4$ \\
 \hline
 $132, 213$ & $1,1,2,3,6,10,20, \dots$ & \href{https://oeis.org/A001405}{A001405} & Theorem \ref{132,213} \\
 \hline
 $132, 231$ & $1,1,1,1,1,1,1, \dots$ &  & Sequence of all $1$s; Theorem \ref{132,231} \\
 \hline
 $132, 312$ & $1,1,2,3,6,10,20, \dots$ & \href{https://oeis.org/A001405}{A001405} & Wilf-equivalent to patterns $132,213$ \\ 
 \hline
 $132, 321$ & $1,1,2,4,7,11,16, \dots$ & \href{https://oeis.org/A152947}{A152947} & Theorem \ref{132,321} \\ 
 \hline
 $213, 231$ & $1,1,2,3,6,10,20, \dots$ & \href{https://oeis.org/A001405}{A001405} & Wilf-equivalent to patterns $132,213$ \\ 
 \hline
 $213, 312$ & $1,1,3,4,11,16,42, \dots$ & \href{https://oeis.org/A027306}{A027306} & Theorem \ref{213,312} \\ 
 \hline
 $213, 321$ & $1,1,3,6,10,15,21, \dots$ & \href{https://oeis.org/A000217}{A000217} & Excluding $n=1$; Theorem \ref{213,321} \\ \hline
 $231, 312$ & $1,1,2,3,6,10,20, \dots$ & \href{https://oeis.org/A001405}{A001405} & Wilf-equivalent to patterns $132,213$ \\
 \hline
 $231, 321$ & $1,1,2,4,8,16,32, \dots$ & \href{https://oeis.org/A011782}{A011782} & Theorem \ref{231,321} \\
  \hline
 $312, 321$ & $1,1,3,6,12,24,48, \dots$ & \href{https://oeis.org/A003945}{A003945} & Excluding $n=1$; Theorem \ref{312,321} \\
 \hline
\end{tabular}
    \caption{Sequences of ballot permutations avoiding two patterns of length 3.}
    \label{double avoidance}
\end{table}

We first present a lemma, which will be used in the proofs of Theorems \ref{123,132} and \ref{123,213}.

\begin{lemma}\label{123 characterization}
Let $\sigma \in B_n(123)$, where $n$ is odd. Then either $\sigma(n)=1$ or $\sigma(n-2)=1$.
\end{lemma}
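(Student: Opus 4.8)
The plan is to prove the statement by induction on odd $n$, reducing from $n$ to $n-2$ by deleting the first two entries of $\sigma$. The base case $n=1$ is immediate, since then $\sigma=1$ and $\sigma(n)=1$. For the inductive step assume $n\ge 3$. First I would observe that $\sigma(2)=n$: the ballot condition applied to the prefix $\sigma(1)\sigma(2)$ forces $\sigma(1)<\sigma(2)$, and if some entry $\sigma(j)$ with $j>2$ exceeded $\sigma(2)$, then $\sigma(1)\,\sigma(2)\,\sigma(j)$ would be a $123$ pattern; hence $\sigma(2)$ dominates all other entries.

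Next I would dispose of the degenerate case $\sigma(1)=1$. Then avoiding $123$ forces $\sigma(3)>\sigma(4)>\cdots>\sigma(n)$ (any ascent among $\sigma(3),\dots,\sigma(n)$, together with the leading $1$, would be a $123$), so the length-$5$ prefix of $\sigma$ has one ascent and three descents whenever $n\ge 5$, violating the ballot property. Therefore $\sigma(1)=1$ can occur only for $n=3$, where $\sigma(n-2)=\sigma(1)=1$ and we are done.

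The main case is $\sigma(1)\neq 1$. Since also $\sigma(2)=n\neq 1$, the entry $1$ lies among $\sigma(3),\dots,\sigma(n)$, where it is the minimum. Let $\tau\in S_{n-2}$ be the standardization of $\sigma(3)\sigma(4)\cdots\sigma(n)$ (so $n-2$ is again odd). Then $\tau$ avoids $123$, being order-isomorphic to a subword of $\sigma$. I would then check that $\tau$ is a ballot permutation: for each $m$ with $3\le m\le n$, the prefix $\sigma(1)\cdots\sigma(m)=\sigma(1)\,n\,\sigma(3)\cdots\sigma(m)$ has precisely one more ascent (at position $1$) and one more descent (at position $2$) than the word $\sigma(3)\cdots\sigma(m)$, so the ballot inequality for $\sigma$ at length $m$ is equivalent to the ballot inequality for $\tau$ at length $m-2$, while the remaining short prefixes impose nothing. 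Applying the inductive hypothesis to $\tau\in B_{n-2}(123)$, the entry $1$ occupies position $n-2$ or $n-4$ of $\tau$, hence position $n$ or $n-2$ of $\sigma$, which is exactly the claim.

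The hard part will not be a single deep idea but the bookkeeping: verifying carefully that deleting $\sigma(1)$ and the entry $\sigma(2)=n$ and then standardizing preserves both $123$-avoidance and the ballot condition, and isolating the small cases. It is worth noting that deleting the entry $n$ in position $2$ cannot affect any $123$ occurrence, since $n$ can only play the role of the largest element of a pattern and has a single entry to its left. The parity hypothesis enters precisely in the degenerate step—the only obstruction, $\sigma(1)=1$, is forced down to $n=3$—and is genuinely needed: for even $n$ the conclusion already fails, e.g. $B_4(123)=\{2413,3412\}$ with $1$ in position $3=n-1$.
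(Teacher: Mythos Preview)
Your proof is correct, but it takes a genuinely different route from the paper's.

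The paper argues directly from counting: since $\sigma$ avoids $123$ it has no two consecutive ascents, and combined with the ballot condition and the parity of $n$ this forces exactly $(n-1)/2$ ascents and $(n-1)/2$ descents (so the ascent/descent word is the strict alternation $ADAD\cdots AD$). Writing $\sigma=\sigma_L\,1\,\sigma_R$, the block $\sigma_R$ must be decreasing (any ascent there would form a $123$ with the $1$), and the alternation then pins down $|\sigma_R|\in\{0,2\}$, yielding $\sigma(n)=1$ or $\sigma(n-2)=1$.

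Your argument instead is inductive: you identify the structural fact $\sigma(2)=n$, peel off the first two entries, and check that the standardization of $\sigma(3)\cdots\sigma(n)$ lands back in $B_{n-2}(123)$, letting the claim descend two steps at a time. The one degenerate branch $\sigma(1)=1$ you handle separately (and correctly; in fact the prefix of length $4$ already fails the ballot inequality, so your length-$5$ check is slightly more than needed).

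What each approach buys: the paper's argument extracts global information---equality of ascent and descent counts and the full alternating pattern $ADAD\cdots AD$---which is later reused implicitly in Theorems~\ref{123,132} and~\ref{123,213}. Your argument is cleaner and more self-contained for the lemma itself, and the observation $\sigma(2)=n$ is a pleasant structural fact in its own right; but it does not immediately hand you the equal-count conclusion, so if you went on to prove the subsequent theorems you would need to recover that separately.
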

\begin{proof}
 Write $\sigma = \sigma_L 1 \sigma_R $ and let $\sigma$ be a ballot permutation avoiding $123$. Since $\sigma$ avoids the pattern $123$, it cannot have two consecutive ascents. But because $\sigma$ is also a ballot permutation, it has at least as many ascents as descents, and we conclude that $\sigma$ has an equal number of ascents and descents. Now if $\sigma_R$ is empty, then $\sigma(n)=1$. If $\sigma_R$ is nonempty, $\sigma_R$ must be decreasing to avoid the pattern $123$. Also $\sigma_R$ must be at most $2$ elements, since if it contained more, then either it would contain an instance of $123$ or there would be more descents than ascents in $\sigma$. But $\sigma_R$ cannot be one element, or else it would end in an ascent, which is impossible since $\sigma$ begins with an ascent and $n$ is odd. Hence $\sigma_R$ must be $2$ elements if it is nonempty, and thus $\sigma(n-2)=1$. 
\end{proof}

Now we proceed to enumerate ballot permutations avoiding pairs of patterns. We first consider when one of patterns is $123$.

\begin{theorem}\label{123,132}
For all $n$, there exists a unique ballot permutation avoiding the patterns $123$ and $132$.
\end{theorem}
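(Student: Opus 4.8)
The plan is to show both existence and uniqueness. For existence, I would simply exhibit the candidate permutation and check directly that it is a ballot permutation avoiding $123$ and $132$; for uniqueness, I would argue that any $\sigma \in B_n(123,132)$ is forced, position by position, to equal that candidate. The natural candidate splits by parity of $n$: for $n$ even, the permutation $\sigma = 2\,1\,4\,3\,6\,5\cdots n\,(n-1)$ (consecutive transpositions), and for $n$ odd, $\sigma = 1\,3\,2\,5\,4\cdots n\,(n-1)$ (a fixed $1$ followed by consecutive transpositions on $\{2,\dots,n\}$). In each case one checks that every prefix has ascents $\geq$ descents (the pattern of ascents/descents is $\mathrm{DADA}\cdots$ or $\mathrm{ADAD}\cdots$, so prefixes are balanced or have one extra ascent), that there is no $123$ (no two consecutive ascents, and any ascent uses a "small then large" pair that is immediately undone), and that there is no $132$.

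For uniqueness, the key structural input is already recorded in the proof of Lemma \ref{123 characterization}: a permutation avoiding $123$ cannot have two consecutive ascents, and a ballot permutation avoiding $123$ therefore has exactly as many ascents as descents, so its ascent/descent word alternates strictly and is determined by $n$'s parity (for $n$ even it must be $\mathrm{DADA}\cdots\mathrm{DA}$... wait — it must begin with an ascent since $\sigma$ is a ballot permutation, so the word is $\mathrm{ADAD}\cdots$; when $n$ is even this forces it to end in a descent, when $n$ is odd it ends in an ascent — one should double-check and, if needed, the $n$ even candidate is instead $1\,3\,2\,5\,4\cdots$ style; I would pin this down in the writeup). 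Given the forced alternating shape, I would then induct: look at the first two entries $\sigma(1)<\sigma(2)$, use $132$-avoidance to show $\sigma(1)=1$ or $\sigma(2)=\sigma(1)+1$ with everything after lying below, and peel off a block of size $1$ or $2$, reducing to a smaller instance of the same problem on a shifted interval. The $132$-avoidance is what kills all but one choice at each step.

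Concretely, the induction step I would run: if $\sigma(1) = 1$, then $\sigma(2)\cdots\sigma(n)$ must avoid $123,132$ and still be "ballot-like" (its prefixes, when the leading $1$/ascent is accounted for, satisfy the needed inequality), reducing $n \mapsto n-1$. If $\sigma(1) > 1$, then since $\sigma(1) < \sigma(2)$ (first step is an ascent) and $\sigma$ avoids $132$, no entry after position $2$ can lie strictly between $\sigma(1)$ and $\sigma(2)$ or above $\sigma(2)$ while... more precisely, $132$-avoidance with the pair $\sigma(1)\sigma(2)$ as the "$1\,3$" forces every later entry to be either $>\sigma(2)$ or $<\sigma(1)$; combined with $123$-avoidance (position $3$ is a descent, $\sigma(3)<\sigma(2)$) and minimality considerations one shows $\sigma(1)=2$, $\sigma(2)=1$... hmm, that contradicts $\sigma(1)<\sigma(2)$. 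So in fact the first step being an ascent with $\sigma(1)>1$ forces a $132$ or breaks the ballot/alternating structure unless $n$ is odd and we are in the $1\,3\,2\cdots$ pattern — I will need to handle the parity base cases ($n=1$ gives $\sigma=1$; $n=2$ gives $\sigma=21$... but $21$ starts with a descent, which is not allowed, so actually $n=2$ gives $\sigma=12$, and $12$ contains... nothing of length $3$, fine) carefully and let the induction carry the rest.

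The main obstacle I anticipate is not any single hard estimate but getting the bookkeeping exactly right: correctly identifying the parity-dependent candidate, verifying the alternating ascent/descent word has the claimed endpoints, and making the "peel off one or two entries" reduction genuinely land back in $B_m(123,132)$ (one must check the ballot condition survives truncation, which is immediate for prefixes but requires care when removing a prefix block rather than a suffix). Once the alternating structure from Lemma \ref{123 characterization}'s proof is in hand, $132$-avoidance does essentially all the remaining work, so I expect the argument to be short but fiddly.
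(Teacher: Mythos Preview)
Your candidate permutations are wrong, and this is the central gap. For even $n$ your proposed $\sigma = 2\,1\,4\,3\cdots n\,(n-1)$ begins with a descent, so it is not a ballot permutation at all (you half-notice this yourself). For odd $n$ your proposed $\sigma = 1\,3\,2\,5\,4\cdots n\,(n-1)$ literally contains the pattern $132$ in its first three entries. The actual unique element of $B_n(123,132)$ is
\[
(12)\ominus(12)\ominus\cdots\ominus(12)\;=\;(n-1)\,n\,(n-3)\,(n-2)\,\cdots\,1\,2
\]
for $n$ even, with a trailing $\ominus\,1$ when $n$ is odd; the large values come first, not the small ones. Once you have the right target, both existence checks are immediate.

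Your uniqueness outline is salvageable but you drew the wrong conclusion from your own observation. You correctly note that with $\sigma(1)<\sigma(2)$, avoidance of $132$ forces every later entry to lie outside the interval $[\sigma(1),\sigma(2)]$; what you missed is that avoidance of $123$ (not just the alternating descent at position $3$, but the global statement) further forbids any later entry from exceeding $\sigma(2)$. Together these force every entry after position $2$ to be \emph{below} $\sigma(1)$, hence $\{\sigma(1),\sigma(2)\}=\{n-1,n\}$ and $\sigma(1)=n-1$, $\sigma(2)=n$. Now $\sigma(3)\cdots\sigma(n)$ is a permutation of $[n-2]$, still avoids $123$ and $132$, and is still ballot (since you have stripped an $AD$ pair from the front of the alternating word), so you can recurse. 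This front-peeling argument is perfectly valid once corrected; the paper instead peels from the back by locating the element $1$ (using Lemma~\ref{123 characterization}) and showing $\sigma$ ends in $\cdots 1\,2$ or $\cdots 1$, then inducts on the prefix $\sigma_L$. Either direction works, but you need the right values at the peeled end.
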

\begin{proof}
Let $\sigma \in B_n(123,132)$ and write $\sigma = \sigma_L 1 \sigma_R$. Note that the case where $n=2$ is immediate, so for the following proof, assume $n>2$. We have two cases:

\begin{enumerate}
    \item $n$ is even.
    
    Using the same logic as in Lemma \ref{123 characterization}, we conclude that $\sigma$ has one more ascent than descent. So $\sigma_R$ cannot be empty and must only be one element to simultaneously avoid $123$ and $132$.
    
    We claim that $\sigma_R$ must be $2$ (the second minimal element in $\sigma$). For the sake of contradiction, suppose that $\sigma_R = r >2$. If there exists an element $m>2$ between $2$ and $1$ in $\sigma$, then $2mr$ is a subsequence of $\sigma$ and is an occurrence of $132$ or $123$. If there does not exist such an element $m$ between $2$ and $1$, then they are adjacent, and hence $\sigma$ contains two consecutive descents and hence is not a ballot permutation. Thus $\sigma_R = 2$.
    
    Note that $\sigma = \sigma_L 12$. Then $\sigma_L$ is a prefix of $\sigma$ and therefore is in $B_{n-2}(123,132)$, so we can inductively use the above reasoning to conclude that $\sigma_L = (12) \ominus (12) \ominus \dots \ominus (12)$. Hence there is a unique $\sigma$ in $B_n(123,132)$.
    
    \item $n$ is odd.
    
    Using Lemma \ref{123 characterization}, $\sigma_R$ must either be empty or two elements. But if $\sigma_R$ contains two elements, it either contains $123$ or $132$. So we conclude that $\sigma_R$ must be empty and hence $\sigma = \sigma_L 1$. And because $\sigma_L$ is a prefix of $\sigma$, it must be in $B_{n-1}(123,132)$, and note that $\sigma_L = (12) \ominus (12) \ominus \dots \ominus (12)$ follows immediately from Case 1. Hence there is a unique $\sigma$ in $B_n(123,132)$.
\end{enumerate}

Thus there is a unique ballot permutation avoiding the patterns $123$ and $132$.
\end{proof}

\begin{theorem}\label{123,213}
Let $a_n = |B_n(123,213)|$. Then $$a_n = \begin{cases}
1, & n=1 \text{ or n is even} \\
2, & \text{ otherwise}.
\end{cases}$$
\end{theorem}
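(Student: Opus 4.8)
The plan is to determine the first one or two entries of every $\sigma\in B_n(123,213)$ and then peel them off. The key reformulation is that $\sigma$ avoids both $123$ and $213$ precisely when, for each position $p$, at most one entry to the left of $\sigma(p)$ is smaller than $\sigma(p)$: two such entries, in either relative order, would complete an occurrence of $123$ or of $213$ together with $\sigma(p)$. Applying this to the largest entry $n$ (to whose left every entry is smaller) shows $n$ occupies position $1$ or $2$; position $1$ is impossible because it would make position $1$ a descent and violate the ballot condition already on the length-$2$ prefix, so $\sigma(2)=n$ whenever $n\ge 2$.

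Next I would locate $n-1$. Every entry to its left other than (possibly) $n$ is smaller than $n-1$, so the same count forces $n-1$ into position $1$, $2$, or $3$; position $2$ is occupied by $n$. If $n-1$ were at position $3$, then $\sigma=\sigma(1)\,n\,(n-1)\,\sigma(4)\cdots$ with $\sigma(4)<n-1$, so positions $2$ and $3$ are consecutive descents and the length-$4$ prefix has one ascent and two descents --- not ballot. This excludes position $3$ as soon as $n\ge 4$ (the case $n=3$ escapes precisely because there is no fourth entry, which is why it behaves differently), so $\sigma(1)=n-1$ and $\sigma(2)=n$ for all $n\ge 4$.

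With the first two entries fixed, $\sigma(3)\cdots\sigma(n)$ is a permutation of $[n-2]$, and I would verify that $\sigma\mapsto\sigma(3)\cdots\sigma(n)$ is a bijection from $B_n(123,213)$ onto $B_{n-2}(123,213)$ for $n\ge 4$; equivalently, $\sigma=(12)\ominus\tau$ ranges over $B_n(123,213)$ as $\tau$ ranges over $B_{n-2}(123,213)$. Pattern avoidance passes both ways because the two leading entries $n-1<n$ exceed every entry of $\tau$, so neither can play the role of the ``$1$'' or ``$2$'' in an occurrence of $123$, nor the ``$2$'' in an occurrence of $213$ (the required larger entry would have to lie to their right, but no entry exceeds $n$, and $n-1$ already sits in position $1$); hence any forbidden pattern must lie entirely within $\tau$. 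The ballot condition passes both ways because prepending the block $(n-1)\,n$ adds exactly one ascent and one descent to every prefix of length $\ge 3$ (and the length-$1$ and length-$2$ prefixes are trivially admissible), so the surplus of ascents over descents in each prefix is left unchanged. This yields $a_n=a_{n-2}$ for all $n\ge 4$.

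Finally I would record the base cases $B_1(123,213)=\{1\}$, $B_2(123,213)=\{12\}$, and $B_3(123,213)=\{132,231\}$ --- in the last case the ballot permutations in $S_3$ are $123,132,231$, of which only $123$ contains a forbidden pattern (this is also consistent with Lemma \ref{123 characterization}, which places the entry $1$ at position $3$ or position $1$; alternatively one could run the entire odd case through that lemma to get $a_n=a_{n-1}+a_{n-3}$). Thus $a_1=a_2=1$ and $a_3=2$, and unwinding $a_n=a_{n-2}$ gives $a_n=1$ for even $n$ and $a_n=2$ for odd $n\ge 3$, together with $a_1=1$, which is exactly the asserted formula. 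The step demanding genuine care is the bijection: one must chase through every way the prepended block $(n-1)\,n$ could combine with entries of $\tau$ to be certain no new $123$ or $213$ appears, and keep the ascent/descent count honest at the short prefixes --- but both become routine once the ``at most one smaller entry to the left'' description is in hand.
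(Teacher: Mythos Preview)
Your argument is correct and proceeds by a genuinely different route than the paper. The paper locates the \emph{smallest} entry, writing $\sigma=\sigma_L\,1\,\sigma_R$, invokes Lemma~\ref{123 characterization} for odd $n$, and argues separately in the even and odd cases that $\sigma_R$ is forced (to $2$, or to empty/$32$), obtaining the explicit description $(12)\ominus\cdots\ominus(12)$ (with a possible $\ominus 1$ or $\ominus 132$ at the end). You instead locate the \emph{largest} two entries via the clean characterization ``at most one smaller entry to the left of each position,'' pin down $\sigma(1)=n-1$, $\sigma(2)=n$ for $n\ge 4$, and peel them off to get the uniform recursion $a_n=a_{n-2}$. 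Both arrive at the same skew-sum structure, but from opposite ends of the permutation. Your approach avoids the even/odd case split in the inductive step (relegating it to base cases), and the ``at most one smaller entry to the left'' reformulation of $\{123,213\}$-avoidance is a reusable tool that makes the bijection verification transparent; the paper's approach, on the other hand, recycles Lemma~\ref{123 characterization} already in place and makes the two permutations in the odd case more explicitly visible. Your parenthetical recurrence $a_n=a_{n-1}+a_{n-3}$ for odd $n$ is exactly what falls out of the paper's decomposition, so the two arguments are dual rather than unrelated.
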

\begin{proof}
Let $\sigma \in B_n(123,213)$ and write $\sigma = \sigma_L 1 \sigma_R$. We have two cases:

\begin{enumerate}
    \item $n$ is even.
    
    Then following the same reasoning in Theorem \ref{123,132}, $\sigma$ contains one more ascent than descent and hence $\sigma_R$ cannot be empty; namely $\sigma_R = 2$. So $\sigma_L$ is a prefix of $\sigma$ and therefore is in $B_{n-2}(123,213)$. A similar inductive argument presented in Theorem \ref{123,132} above shows that $\sigma_L = (12) \ominus (12) \ominus \dots \ominus (12)$. Hence only $\sigma$ is in $B_n(123,213)$.
    
    \item $n$ is odd.
    
    Then from Lemma \ref{123 characterization}, either $\sigma = \sigma_L 1$ or $\sigma = \sigma_L 1 a b$.
    
    If $\sigma = \sigma_L 1$, then the same argument in Theorem \ref{123,132} concludes that $\sigma_L = (12) \ominus \dots \ominus (12)$. 
    
    If $\sigma = \sigma_L 1 a b$, then $ab$ must be $32$ to avoid $123$ and $213$. Then $\sigma_L = (12) \ominus \dots \ominus (12)$ follows by the same argument.
    
    If $n$ is odd, there are two different elements in $B_n(123,213)$.
\end{enumerate}

Therefore, $a_n = 2$ if $n$ is odd and $a_n = 1$ if $n$ is even.
\end{proof}

We will now show four sets of patterns are Wilf-equivalent to each other via bijection.

\begin{theorem}\label{wilf equivalence}
The four sets $B_n(132,213)$, $B_n(213,231)$, $B_n(231,312)$, and $B_n(132,312)$ are Wilf-equivalent.
\end{theorem}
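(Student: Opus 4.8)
The plan is to prove something slightly stronger: that each of the four classes is placed in an explicit bijection with one and the same set, namely $\mathcal{L}_{n-1}$, the set of \emph{ballot words} of length $n-1$, i.e.\ words over $\{+1,-1\}$ all of whose partial sums are nonnegative (equivalently, left factors of Dyck paths). In every case the bijection is the \emph{descent word} map $\delta$: to $\sigma\in S_n$ assign $\delta(\sigma)\in\{+1,-1\}^{n-1}$ with $\delta(\sigma)_i=+1$ when $i$ is an ascent of $\sigma$ and $\delta(\sigma)_i=-1$ when $i$ is a descent. The $(p-1)$st partial sum of $\delta(\sigma)$ equals the number of ascents minus the number of descents in the prefix $\sigma(1)\cdots\sigma(p)$, so by the definition of a ballot permutation, $\sigma\in B_n$ if and only if $\delta(\sigma)\in\mathcal{L}_{n-1}$. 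Hence it suffices to show that for each of the four pattern pairs $\{\pi,\tau\}$ the restriction $\delta\colon S_n(\pi,\tau)\to\{+1,-1\}^{n-1}$ is a bijection; intersecting both sides with the ballot condition then gives a bijection $B_n(\pi,\tau)\to\mathcal{L}_{n-1}$, and all four classes become equinumerous with common cardinality $|\mathcal{L}_{n-1}|$.

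To prove $\delta$ is bijective on each $S_n(\pi,\tau)$, I would record the structural description of the class and read the inverse of $\delta$ off it. (a) $S_n(132,213)$ is exactly the set of skew sums $\Id_{a_1}\ominus\cdots\ominus\Id_{a_k}$ of consecutively increasing runs; its descent word has the form $(+1)^{a_1-1}(-1)(+1)^{a_2-1}(-1)\cdots(-1)(+1)^{a_k-1}$, so the positions of the $-1$'s recover the composition $(a_1,\dots,a_k)$, and arranging the blocks in decreasing order of value recovers $\sigma$. (b) Applying $\rev$ (with $\rev(132)=231$ and $\rev(213)=312$), $S_n(231,312)$ is exactly the set of direct sums of consecutively decreasing runs, with descent words $(-1)^{a_1-1}(+1)(-1)^{a_2-1}(+1)\cdots(+1)(-1)^{a_k-1}$, handled identically. (c) $S_n(213,231)$ is exactly the set of permutations in which each $\sigma(i)$ is the smallest or the largest value of $\{\sigma(i),\sigma(i+1),\dots,\sigma(n)\}$; then $i$ is an ascent iff $\sigma(i)$ was the smaller option, so $\delta(\sigma)$ is literally the sequence of min/max choices, and $\sigma$ is recovered by scanning $\delta(\sigma)$ left to right, taking the smallest remaining value on a $+1$ and the largest on a $-1$. (d) Dually, $S_n(132,312)$ is exactly the set of permutations in which each $\sigma(i)$ is a left-to-right minimum or a left-to-right maximum; here $i$ is an ascent iff $\sigma(i+1)$ is a new maximum, and $\sigma$ is recovered by scanning $\delta(\sigma)$ from the right: set $\sigma(n)=n$ if $\delta(\sigma)_{n-1}=+1$ and $\sigma(n)=1$ if $\delta(\sigma)_{n-1}=-1$, then recurse on the standardization of $\sigma(1)\cdots\sigma(n-1)$ with the word $\delta(\sigma)_1\cdots\delta(\sigma)_{n-2}$. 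In each case this produces a two-sided inverse of $\delta$, so $\delta$ is a bijection.

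The real content, and the step I expect to be the main obstacle, is verifying the four structural characterizations, especially the extremal-value descriptions (c) and (d). For (c): if some $\sigma(i)$ were neither the minimum nor the maximum of $\{\sigma(i),\dots,\sigma(n)\}$, there would be indices $i<b<c$ with $\sigma(b),\sigma(c)$ on opposite sides of $\sigma(i)$, and then $\sigma(i)\sigma(b)\sigma(c)$ is an occurrence of $213$ or of $231$ according to which of $\sigma(b),\sigma(c)$ is larger; conversely, any permutation built by repeatedly choosing the current smallest or largest available value avoids both $213$ and $231$. Description (d) follows by applying (c) to $\rev(\sigma)$ (using $\rev(213)=312$ and $\rev(231)=132$), or directly by the analogous argument on left-to-right extrema. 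Descriptions (a) and (b) are standard and can be obtained by locating the value $1$ in a $\{132,213\}$-avoider and noting that the suffix following it must be increasing and entirely below everything preceding it. Once the characterizations are in place, the computation of the descent words and of the inverse maps is routine bookkeeping (and each class is seen to have $2^{n-1}$ elements, matching $|\{+1,-1\}^{n-1}|$, giving an independent check that $\delta$ is bijective). In particular all four classes have cardinality $|\mathcal{L}_{n-1}|=\binom{n-1}{\lfloor (n-1)/2\rfloor}$, the number of left factors of Dyck paths of length $n-1$, which also supplies the bijection announced for $B_n(132,213)$.
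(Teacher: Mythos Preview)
Your proposal is correct, and at heart it uses the same key observation as the paper: for each of the four pairs $\{\pi,\tau\}$, the structure of $S_n(\pi,\tau)$ is so rigid that a permutation in the class is determined by the positions of its descents, so any descent-position-preserving map yields the desired bijection. The paper carries this out by writing down explicit (and somewhat clunky) block decompositions of each class and then building pairwise bijections $B_n(132,213)\leftrightarrow B_n(213,231)\leftrightarrow B_n(231,312)\leftrightarrow B_n(132,312)$ that match block lengths. You instead route everything through a single common target, the set $\mathcal{L}_{n-1}$ of ballot words, via the descent-word map $\delta$; this is cleaner, more uniform, and your ``extremal value'' characterizations of $S_n(213,231)$ and $S_n(132,312)$ are sharper than the paper's nested sum formulas. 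A further dividend of your framing is that it proves, in the same breath, the enumeration $|B_n(\pi,\tau)|=|\mathcal{L}_{n-1}|=\binom{n-1}{\lfloor (n-1)/2\rfloor}$ and the bijection with left factors of Dyck paths, which the paper establishes separately as Theorem~\ref{132,213}.
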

\begin{proof}
In each of the following bijections between $B_n(\pi_1, \pi_2)$ and $B_n(\pi_1', \pi_2')$, we first construct a bijection from $S_n(\pi_1, \pi_2)$ to $S_n(\pi_1', \pi_2')$ that preserves the positions of each descent and ascent in every permutation, and hence this restricts the bijection from $B_n(\pi_1, \pi_2)$ to $B_n(\pi_1', \pi_2')$.

We first show a bijection between $B_n(132,213)$ and $B_n(213,231)$. Elements in $B_n(132,213)$ are of the form $\Id_{k_1} \ominus \Id_{k_2} \ominus \dots \ominus \Id_{k_m}$, as illustrated in Figure \ref{Wilfs}. Note that $k_1 >1$ while any other $k_i$ (where $i \neq 1$) may equal $1$, as long as the resulting permutation is a ballot permutation. We must have $k_1 > 1$ since the permutation is a ballot permutation and must start with an ascent. The permutation must be in this form since ascents must be consecutive to avoid $132$ and if there is a descent between element $i$ and element $j$, then consecutive ascents after $j$ must cover all elements up to $i$ to avoid $213$. 

Similarly, elements in $B_n(213,231)$ can be written as $((\Id_{k_1'} n \oplus \Id_{k_2'}) (n-1) \oplus \dots \oplus \Id_{k_m'}) (n-m+1)$, where the $(n-i+1)$ terms do not change under direct sum and each $(n-i+1)$ term is the largest element of every element after it. In other words, these terms are essentially ignored in the direct sum operations. This is also shown in Figure \ref{Wilfs}. Note that $k_1'>0$ while any other $k_i'$ may equal $0$, as long as the resulting permutation is a ballot permutation. Now we can rewrite this as $\sigma_{k_1} \oplus \sigma_{k_2} \oplus \dots \oplus \sigma_{k_m}$, where $\sigma_{k_i} = \Id_{k_i'} (n-i+1)$ and the $(n-i+1)$ terms does not change under direct sum.

And hence we can send $\sigma_{k_1} \oplus \sigma_{k_2} \oplus \dots \oplus \sigma_{k_m}$ to $\Id_{k_1} \ominus \Id_{k_2} \ominus \dots \ominus \Id_{k_m}$, due to each $\sigma_{k_i}$ ending in $(n-i-1)$. Note that this is a bijection from $S_n(132,213)$ to $S_n(213,231)$ that preserves the positions of ascents and descents in every permutation, so this property restricts the bijection between $B_n(132,213)$ to $B_n(213,231)$.

Now we show a bijection between $B_n(213,231)$ and $B_n(231,312)$. As discussed above, elements in $B_n(213,231)$ can be written in the form $\sigma_{k_1} \oplus \sigma_{k_2} \oplus \dots \oplus \sigma_{k_m}$, where $\sigma_{k_i} = \Id_{k_i'} (n-i+1)$. Now note that elements in $B_n(231,312)$ are in the form of $1 \oplus \rev(\Id_{k_1}) \oplus \dots \oplus \rev(\Id_{k_m})$, where each $\Id_{k_i}$ may be one element. Elements in $B_n(213,231)$ are of the form $((\Id_{k_1'} n \oplus \Id_{k_2'}) (n-1) \oplus \dots \oplus \Id_{k_m'}) (n-m+1)$. Note that $k_1'>0$ while any other $k_i'$ may equal $0$. Now we transform $1 \oplus \rev(\Id_{k_1}) \oplus \dots \oplus \rev(\Id_{k_m})$ into an element in $B_n(213,231)$ by preserving the place of each ascent and descent. This expression can be rewritten as the direct sum of identity permutations, with maximal elements to represent the places where descents occur. In other words, every element of the form $1 \oplus \rev(\Id_{k_1}) \oplus \dots \oplus \rev(\Id_{k_m})$ can be turned into an element of the form $((\Id_{k_1'} n \oplus \Id_{k_2'}) (n-1) \oplus \dots \oplus \Id_{k_m'}) (n-m+1)$ such that the place of every descent and ascent is preserved. And the same argument works in reverse, so we conclude that there is a bijection between $B_n(213,231)$ and $B_n(231,312)$.

We show a bijection between $B_n(231,312)$ and $B_n(132,312)$. 
Note that elements in $B_n(231,312)$ can be written in the form $1 \oplus \rev(\Id_{k_1}) \oplus \dots \oplus \rev(\Id_{k_m})$, where each $\Id_{k_i}$ may be one element. 
Observe that elements in $B_n(132,312)$ can be written in the form $(((( \cdots (m \oplus \Id_{k_{m}}) \ominus \cdots ) \ominus 2 ) \oplus \Id_{k_2} ) \ominus 1 ) \oplus \Id_{k_1}$, where $1, \dots , m$ are the first $m$ minimal elements in $\sigma$ and $\Id_{k_i}$ may be empty. 
These are also shown in Figure \ref{Wilfs}. Now we will turn $\sigma$ into an element of $B_n(231,312)$. Note that by this construction, there will always be a descent after each identity permutation in the sum, which we may write in terms of a reverse of an identity permutation. Also noting that $\Id_{k_i}$ may be written as $\rev(1) \oplus \dots \oplus \rev(1)$, we can turn the expression above to $1 \oplus \rev(\Id_{k_1}) \oplus \dots \oplus \rev(\Id_{k_j})$ while preserving every descent and ascent. A similar argument works in reverse, and hence there is a bijection between $B_n(132,312)$ and $B_n(231,312)$.
\end{proof}

\begin{figure}[htp]
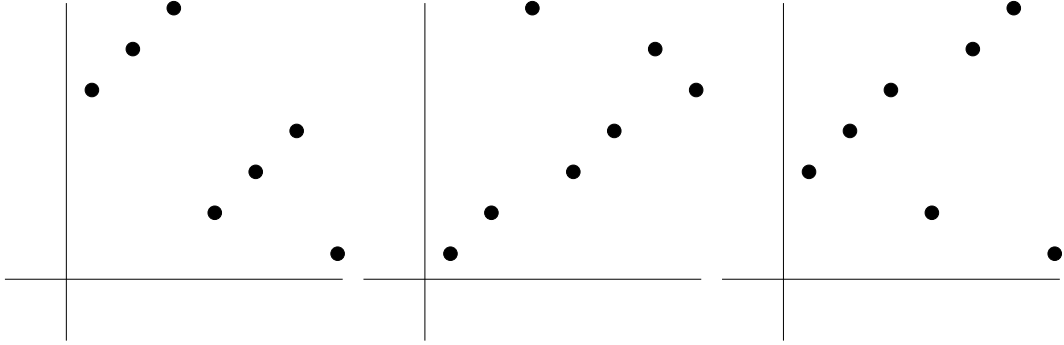

    \centering
    \begin{subfigure}[t]{.3\linewidth}
    \centering
    \resizebox{\columnwidth}{!}{%
    \drawperms{black/{1/5,2/6,3/7}/left,black/{4/2,5/3,6/4,7/1}/right}
    }
    \caption{The form of a permutation in $B_n(132,213)$.}
  \end{subfigure}
  \begin{subfigure}[t]{.3\linewidth}
    \centering
     \resizebox{\columnwidth}{!}{%
    \drawperms{black/{1/1,2/2,3/7}/left,black/{4/3,5/4,6/6,7/5}/right}
    }
    \caption{The form of a permutation in $B_n(213,231)$.}
  \end{subfigure}
  \begin{subfigure}[t]{.3\linewidth}
    \centering
    \resizebox{\columnwidth}{!}{%
    \drawperms{black/{1/3,2/4,3/5}/left,black/{4/2,5/6,6/7,7/1}/right}
    }
    \caption{The form of a permutation in $B_n(132,312)$.}
  \end{subfigure}
    \caption{Example forms of permutations in $B_n(132,213)$, $B_n(213,231)$, and $B_n(231,312)$. All of these permutations can be mapped to each other and to the left factor $UUDUUD$, as will be shown in Theorem \ref{132,213}.}
    \label{Wilfs}
\end{figure}

Note that this result also shows that the distribution of descents is consistent for all elements in these four sets.

We will show in the following theorem that the elements in $B_n(132,213)$ are in bijection with left factors of Dyck paths of $n-1$ steps. But first we provide the following definition:

\begin{definition}
A \emph{left factor} of a Dyck path is the path made up of all steps that precede the last $D$ step in a Dyck path. Left factors of Dyck paths of $n$ steps are left factors of all possible Dyck paths such that the path preceding the last $D$ step contains $n$ steps.
\end{definition}

\begin{example}\label{left factor}
Consider the Dyck path $UUDUUDDDUUDD$ shown in Figure \ref{Dyck}. The left factor associated with this Dyck path is $UUDUUDDDUU$.
\end{example}

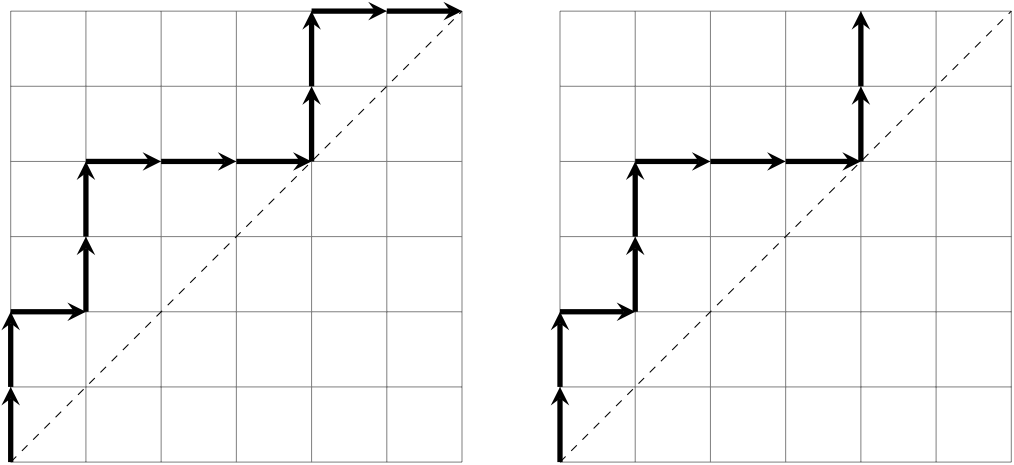
\begin{figure}[htp]
    \centering
    \begin{tikzpicture}
    \NEpath{0,0}{6}{6}{1,1,0,1,1,0,0,0,1,1,0,0};
    \end{tikzpicture}
    \hspace{1cm}
    \begin{tikzpicture}
    \NEpath{0,0}{6}{6}{1,1,0,1,1,0,0,0,1,1};
    \end{tikzpicture}
    \caption{The Dyck path and its corresponding left factor in Example \ref{left factor}.}
    \label{Dyck}
\end{figure}

The following theorem presents a bijection between ballot permutations avoiding $132$ and $213$ with left factors of Dyck paths. Since every prefix of a ballot permutation contains no more descents than ascents, this makes left factors of Dyck paths a very natural combinatorial object to biject to. 

\begin{theorem}\label{132,213}
The elements in $B_n(132,213)$ are in bijection with left factors of Dyck paths of $n-1$ steps, which are counted by the OEIS sequence \href{https://oeis.org/A001405}{A001405} \cite{oeis}.
\end{theorem}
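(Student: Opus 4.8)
The plan is to make the bijection explicit using the structural characterization of $B_n(132,213)$ already established in Theorem~\ref{wilf equivalence}. Recall that every $\sigma \in B_n(132,213)$ has the form $\Id_{k_1} \ominus \Id_{k_2} \ominus \dots \ominus \Id_{k_m}$ with $k_1 > 1$, each subsequent $k_i \geq 1$, and $\sum k_i = n$. The key observation is that such a permutation is completely determined by the composition $(k_1, k_2, \dots, k_m)$ of $n$, subject to the ballot condition. So the first step is to translate the ballot condition on $\sigma$ into a condition on the composition: reading $\sigma$ left to right, each block $\Id_{k_i}$ contributes $k_i - 1$ ascents (internal to the block), and the junction between $\Id_{k_i}$ and $\Id_{k_{i+1}}$ contributes one descent (since skew sum places the later block below). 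Thus the ascent/descent word of $\sigma$ is determined, and the ballot condition becomes a lattice-path condition on the associated step sequence.

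The main step is then to define the map to left factors: send $\sigma$ to the path $w(\sigma)$ of $n-1$ steps obtained by recording, in order, each ascent of $\sigma$ as an up-step $U$ and each descent as a down-step $D$ (a permutation of length $n$ has exactly $n-1$ adjacencies). I would check three things in sequence: (i) $w(\sigma)$ is a genuine left factor of a Dyck path, i.e.\ every prefix has at least as many $U$'s as $D$'s — this is precisely the ballot condition on $\sigma$, so it is immediate; (ii) the map is injective — given the step word, one reconstructs the block composition $(k_1, \dots, k_m)$ by cutting at each $D$, hence reconstructs $\sigma$ uniquely from the canonical skew-sum-of-identities form; (iii) the map is surjective — given any left factor $w$ of a Dyck path on $n-1$ steps, the word $w$ must begin with $U$ (a left factor with a non-empty prefix cannot start with $D$ while staying nonnegative, and the $n \geq 2$ case forces at least one step), so cutting $w$ at its $D$-steps yields a composition with $k_1 > 1$, and the corresponding skew sum of identity blocks is a valid element of $B_n(132,213)$ whose step word is $w$.

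Finally I would note that left factors of Dyck paths on $n-1$ steps are enumerated by \href{https://oeis.org/A001405}{A001405}, namely $\binom{n-1}{\lfloor (n-1)/2 \rfloor}$: such a path of length $n-1$ is a nonnegative lattice path from the origin with $\pm 1$ steps ending at any height, and the number of these is the central binomial coefficient by the standard reflection/ballot count. This matches the sequence $1,1,2,3,6,10,20,\dots$ listed in Table~\ref{double avoidance}, completing the identification.

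I expect the only real subtlety to be step (iii), ensuring the reconstructed composition always satisfies $k_1 > 1$ and that one correctly handles the edge behavior at the end of the word (a left factor need not end in $U$ or return to height $0$, and the very last adjacency of $\sigma$ — whether the final block has size $1$ or is entered by a descent — must be tracked carefully). Everything else is a routine unwinding of the skew-sum normal form against the ascent/descent word.
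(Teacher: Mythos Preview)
Your proposal is correct and follows essentially the same approach as the paper: both exploit the skew-sum-of-identities normal form from Theorem~\ref{wilf equivalence} and encode the ascent/descent word of $\sigma$ as the $U/D$ word of a left factor. The paper phrases the construction by first grouping consecutive singleton blocks $\Id_1$ into $\rev(\Id_\ell)$ runs before reading off ups and downs, whereas you map ascents and descents directly to $U$ and $D$ and verify injectivity/surjectivity more explicitly---but this is the same bijection, and your treatment of the edge cases in step~(iii) is, if anything, more careful than the paper's.
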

\begin{proof}
Note that the elements in $B_n(132,213)$ are the skew sum of consecutively increasing permutations. Moreover, since they have to be ballot, the first two elements in any $\sigma \in B_n(132,213)$ must be increasing.

Let $\sigma = \Id_{k_1} \ominus \Id_{k_2} \ominus \dots \ominus \Id_{k_m}$. Note that $k_1 >1$ while any other $k_i$ may equal $1$. Now we can group together consecutive $k_i$s where each $k_i = 1$. So we get $\sigma = \Id_{k_1} \ominus \rev(\Id_{\ell_1}) \ominus \Id_{\ell_2} \ominus \cdots$. Then note that $\Id_{k_1} \ominus \rev(\Id_{\ell_1})$ uniquely determine a series of ups and downs in the left factor Dyck path. We can use the same argument for the rest of the terms in the direct sum of $\sigma$ to conclude that each $\sigma \in B_n(132,213)$ uniquely determine a series of ups and downs in a left factor Dyck path. And we can see that this argument works in reverse as well, since consecutive ascents can be grouped together into an identity term in $\sigma$, and consecutive descents can be grouped together into a reverse identity term in $\sigma$. Hence we conclude that there exists a bijection between the elements in $B_n(132,213)$ and left factors of Dyck paths of $n-1$ steps. 

And hence \begin{align*}
    |B_{n+1}(132,213)| = {n \choose {\lfloor \frac{n}{2} \rfloor}}. & \qedhere
\end{align*}
\end{proof}

The following example illustrates the bijection presented in Theorem \ref{132,213}.

\begin{example}\label{bijectexample}
The ballot permutation $\sigma = 456312$ is in $B(132,213)$ and is in bijection with the left factor $UUDDU$, as shown in Figure \ref{biject}.
\end{example}

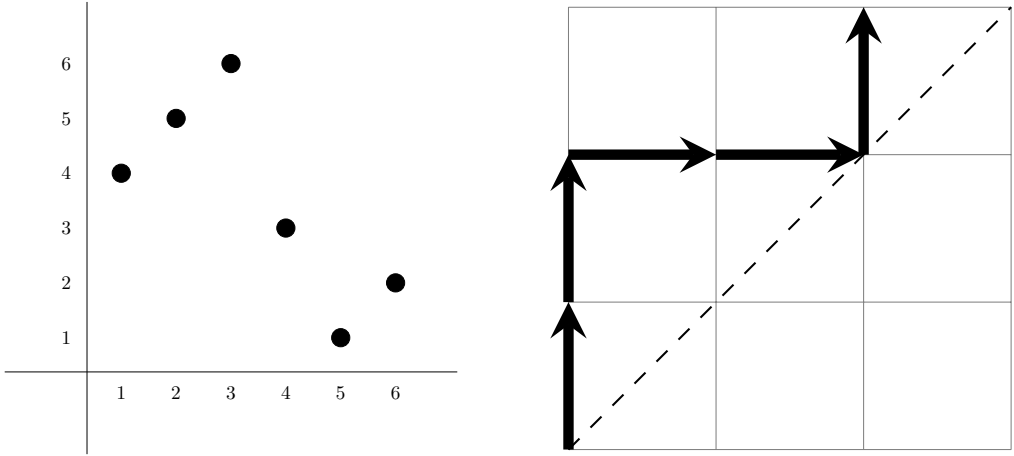
\begin{figure}[htp]
    \centering
    \resizebox{2\columnwidth/5}{!}{%
    \drawpermutate{black/{1/4,3/6,5/1}/left,black/{4/3,2/5,6/2}/right}
    }
    \hspace{1cm}
    \resizebox{2\columnwidth/5}{!}{%
    \begin{tikzpicture}
    \NEpath{0,0}{3}{3}{1,1,0,0,1};
    \end{tikzpicture}
    }
    \caption{A permutation in $B_6(132,213)$ and its corresponding left factor of a Dyck path in Example \ref{bijectexample}.}
    \label{biject}
\end{figure}

\begin{theorem}\label{132,231}
There exists a unique ballot permutation avoiding the patterns $132$ and $231$.
\end{theorem}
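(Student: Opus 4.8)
The plan is to show that the unique element of $B_n(132,231)$ is the identity permutation $\mathrm{Id}_n = 12\cdots n$, by locating the minimal entry and reading off the forced structure, exactly in the spirit of the earlier theorems. So I would take $\sigma \in B_n(132,231)$ and write $\sigma = \sigma_L\,1\,\sigma_R$. Pattern avoidance gives two structural constraints. First, avoiding $231$ forces $\sigma_L$ to be decreasing: if $\sigma_L$ had two entries in increasing order, then those two entries (both larger than $1$) together with the $1$ immediately to their right would form a $231$. Second, avoiding $132$ forces $\sigma_R$ to be increasing: if $\sigma_R$ had two entries in decreasing order, then the $1$ to their left together with those two entries would form a $132$.

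Next I would bring in the ballot condition. Any ballot permutation must begin with an ascent, since otherwise its length-$2$ prefix contributes one descent and no ascent. Now suppose $\sigma_L$ is nonempty. If $|\sigma_L|\ge 2$, then $\sigma$ opens with a descent because $\sigma_L$ is decreasing; if $|\sigma_L| = 1$, then $\sigma(1) = \sigma_L(1) \ge 2 > 1 = \sigma(2)$, again a descent. In both cases the length-$2$ prefix violates the ballot inequality, a contradiction, so $\sigma_L$ is empty. Hence $\sigma = 1\,\sigma_R$ where $\sigma_R$ is an increasing arrangement of $\{2,\dots,n\}$, and the only such arrangement is $2\,3\cdots n$, giving $\sigma = \mathrm{Id}_n$. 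For the converse, $\mathrm{Id}_n$ has no descent at all, so it avoids $132$ and $231$, and every prefix of it is increasing, so it is a ballot permutation; thus $B_n(132,231) = \{\mathrm{Id}_n\}$.

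I do not expect a real obstacle here: the argument is a short case analysis. The one place to be careful is the deduction that $\sigma_L$ must be empty — it is easy to rule out $|\sigma_L|\ge 2$ and overlook that even a single entry sitting before the $1$ already breaks the ballot property — so I would spell out the length-$2$-prefix argument explicitly and treat both cases. (An alternative route is to recall that $S_n(132,231)$ consists precisely of the permutations obtained by repeatedly inserting the current largest value at one of the two ends, of which there are $2^{n-1}$, and then observe that the ballot condition eliminates every choice except always inserting at the right end; but the direct $\sigma_L\,1\,\sigma_R$ analysis is cleaner and matches the surrounding proofs, so that is the one I would write up.)
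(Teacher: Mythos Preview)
Your proof is correct. The approach, however, differs from the paper's. You decompose around the position of the minimal entry, writing $\sigma=\sigma_L\,1\,\sigma_R$, and use $231$-avoidance and $132$-avoidance to force $\sigma_L$ decreasing and $\sigma_R$ increasing, after which the ballot condition on the length-$2$ prefix rules out a nonempty $\sigma_L$. The paper instead starts from the front: the ballot condition gives $\sigma(1)<\sigma(2)$; $132$-avoidance forces $\sigma(1),\sigma(2)$ to be consecutive values $k,k+1$; $231$-avoidance then forces every value below $k$ to sit to the left of $k=\sigma(1)$, which is impossible unless $k=1$; and an implicit induction finishes the argument. Your route is a bit more self-contained (you explicitly handle the converse and spell out the $|\sigma_L|=1$ edge case), whereas the paper's version is terser but leaves the final inductive step to the reader. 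Either argument is perfectly adequate for this easy theorem.
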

\begin{proof}
Let $\sigma \in B_n(132,231)$. Then the first two elements of $\sigma$ must be increasing since $\sigma$ is a ballot permutation. Moreover, these two elements must also be consecutive to avoid an occurrence of $132$. So call these two elements $k$ and $k+1$. Elements smaller than $k$ must be placed before $k$ to avoid an occurrence of $231$. Starting from the minimal element $1$, elements less than $k$ must be placed consecutively to avoid $132$, so we conclude that $\sigma = \Id_n$.
\end{proof}

Now we show a bijection between ballot permutations of length $n+1$ avoiding the patterns $132$ and $321$ with permutations of length $n$ avoiding the same patterns. This involves removing the second element of the ballot permutation and noting that the remaining subpermutation will still avoid $132$ and $321$.

\begin{theorem}\label{132,321}
The elements in $B_{n+1}(132,321)$ are in bijection with the elements in $S_n(132,321)$.
\end{theorem}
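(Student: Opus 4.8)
The plan is to prove that for each $m\ge 2$ the map $\Phi_m$ that deletes the entry in position $2$ and restandardizes — so $\Phi_m(\sigma)$ is the standardization of $\sigma(1)\sigma(3)\sigma(4)\cdots\sigma(m)$ — is a bijection from $B_m(132,321)$ to $S_{m-1}(132,321)$; taking $m=n+1$ gives the theorem. I would induct on $m$, and I would begin by recording a structural description of both classes. In the spirit of the proof of Theorem~\ref{132,213}: if $\pi\in S_k(132,321)$ has its maximum $k$ in position $p$, then $132$-avoidance forces every entry before $p$ to exceed every entry after $p$, and when $p<k$ the $321$-avoidance then forces both blocks to be increasing (a descent in the left block together with any entry of the nonempty right block is a $321$, and $k$ together with any descent of the right block is a $321$). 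Hence each $\pi\in S_k(132,321)$ is exactly one of: (form A) $\pi=\pi'\,k$ with $\pi'\in S_{k-1}(132,321)$; or (form B) $\pi=\delta_{k,b}:=\Id_{k-b}\ominus\Id_{b}=(b+1)(b+2)\cdots k\,1\,2\cdots b$ for a unique $b$ with $1\le b\le k-1$. Running the same analysis for ballot permutations, and using that $\sigma'\,k$ is ballot if and only if $\sigma'$ is, each $\sigma\in B_k(132,321)$ is exactly one of: (form A) $\sigma=\sigma'\,k$ with $\sigma'\in B_{k-1}(132,321)$; or (form B) $\sigma=\delta_{k,b}$, where the ballot condition at the unique descent now restricts $b$ to $1\le b\le k-2$.

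Next I would verify that $\Phi_m$ is well defined: deleting a letter and restandardizing cannot create a $132$ or a $321$, since any such occurrence in $\Phi_m(\sigma)$ lifts to one in $\sigma$, so $\Phi_m(\sigma)\in S_{m-1}(132,321)$. I would also check that $\Phi_m$ respects the dichotomy: if $\sigma$ is form A then $\sigma(m)=m$ stays the largest retained value and becomes $m-1$, so $\Phi_m(\sigma)$ is form A; if $\sigma$ is form B then its last entry $b\le m-2$ restandardizes to $b<m-1$, so $\Phi_m(\sigma)$ is form B. I would then compute $\Phi_m$ on each piece. For form B, here $\sigma(2)=b+2$ (the run $(b+1)\cdots m$ has length $m-b\ge 2$), and deleting it and restandardizing simply collapses the one vacated value, giving $\Phi_m(\delta_{m,b})=\delta_{m-1,b}$; since the form-B members of $B_m(132,321)$ and of $S_{m-1}(132,321)$ are both indexed by $1\le b\le m-2$, the rule $b\mapsto\delta_{m-1,b}$ makes $\Phi_m$ a bijection between the two form-B sets. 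For form A, writing $\sigma=\tau\,m$ with $\tau\in B_{m-1}(132,321)$, deleting $\sigma(2)=\tau(2)$ keeps $m$ as the new maximum $m-1$ and restandardizes $\tau(1)\tau(3)\cdots\tau(m-1)$ exactly as $\Phi_{m-1}$ does, so $\Phi_m(\sigma)=\Phi_{m-1}(\tau)\,(m-1)$.

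Finally I would close the induction. The base case $m=2$ is immediate: $B_2(132,321)=\{12\}$ maps to $\{1\}=S_1(132,321)$. For the inductive step, the hypothesis says $\Phi_{m-1}\colon B_{m-1}(132,321)\to S_{m-2}(132,321)$ is a bijection, so $\tau\mapsto\Phi_{m-1}(\tau)\,(m-1)$ is a bijection from form A of $B_m(132,321)$ onto form A of $S_{m-1}(132,321)$; combined with the form-B bijection, and since form A and form B partition each of the two sets, $\Phi_m\colon B_m(132,321)\to S_{m-1}(132,321)$ is a bijection. As a byproduct one reads off $|B_{n+1}(132,321)|=|S_n(132,321)|=\binom{n}{2}+1$, the sequence \href{https://oeis.org/A152947}{A152947}. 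I expect the main obstacle to be the structural classification of the two avoidance classes — in particular, pinning down that form B is the only shape with the maximum not in the last position, which needs the $321$-avoidance argument carried out carefully — together with, to a lesser extent, the bookkeeping in the form-B case for how restandardization fills the gap left by removing $\sigma(2)$.
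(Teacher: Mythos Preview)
Your proof is correct and uses the very same bijection as the paper: delete the entry in position $2$ and restandardize. The paper verifies bijectivity by writing down the explicit inverse---observing that any $\pi\in S_n(132,321)$ has the shape $\sigma_L\,n\,\sigma_R$ with $\sigma_L n$ and $\sigma_R$ consecutively increasing, and then inserting $n+1$ immediately after $n$---whereas you instead split both classes into ``form A'' (maximum last) and ``form B'' ($\delta_{k,b}=\Id_{k-b}\ominus\Id_b$) and run an induction. Your structural dichotomy is exactly the characterization the paper is implicitly using, so the two arguments are essentially the same; your route is a bit longer but makes the bookkeeping (why the ballot constraint cuts the form-B range down to $1\le b\le k-2$, and why deletion at position~$2$ respects the A/B split) more explicit, while the paper's explicit inverse gets there in one step once that structure is stated.
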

\begin{proof}
Let $\sigma \in B_{n+1}(132,321)$. Then since $\sigma$ is a ballot permutation, the first two elements of $\sigma$ are increasing, and they must be consecutive to avoid an occurrence of $132$. So let us write $\sigma = k(k+1) \sigma_R$. Then removing an element will still avoid these patterns, so $k \sigma_R \in S_n(132,321)$.

Now let $\sigma \in S_n(132,321)$. Then write $\sigma = \sigma_L n \sigma_R$, where $\sigma_L n$ and $\sigma_R$ are both consecutively increasing. Note that $\sigma$ has at most one descent. 

Then inserting a consecutive increasing element into the second index of $\sigma$ and standardizing everything else preserves the number of descents, and moreover, still avoids the patterns $132$ and $321$. More specifically, $\sigma_L n (n+1) \sigma_R$ will still avoid $132$ and $321$ (this is the inverse of the map above since $\sigma_L n$ is consecutively increasing). This permutation will still be a ballot permutation since it starts with an ascent and the permutation contains at most one descent, because $\sigma_L$ and $\sigma_R$ are both consecutively increasing. 

And hence $\sigma_L n (n+1) \sigma_R \in B_{n+1}(132,321)$. This is sufficient to show a bijection between the elements in $B_{n+1}(132,321)$ and the elements in $S_n(132,321)$. Simion and Schmidt \cite{simion1985restricted} proved that $|S_n(132,321)| = {n \choose 2} + 1$, so 

\begin{align*}
    |B_{n+1}(132,321)| = |S_n(132,321)| ={n \choose 2} + 1. & \qedhere
\end{align*}
\end{proof}

\begin{theorem}\label{213,312}
Let $a_n = |B_n(213,312)|$. Then $$a_n = \sum_{k=0}^{\lfloor \frac{n}{2} \rfloor} {n \choose k},$$ which is listed as the OEIS sequence \href{https://oeis.org/A027306}{A027306} \cite{oeis}.
\end{theorem}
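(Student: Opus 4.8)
The plan is to reduce the count to a problem about binomial coefficients, by first describing the structure of $S_n(213,312)$ and then isolating which of those permutations are ballot. \emph{Step 1 (structure of $S_n(213,312)$).} Both $213$ and $312$ have their smallest entry in the middle position, so $\sigma$ avoids both precisely when it contains no ``valley'' triple $i<j<k$ with $\sigma(j)<\sigma(i)$ and $\sigma(j)<\sigma(k)$. Taking the minimum entry strictly between a descent and a later ascent shows that this condition is equivalent to $\sigma$ being unimodal, i.e.\ $\sigma=\alpha\,n\,\beta$ where $\alpha$ lists some $U\subseteq[n-1]$ in increasing order and $\beta$ lists $[n-1]\setminus U$ in decreasing order (either block possibly empty). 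Hence $S_n(213,312)$ is parametrized by the set $U$ of entries that precede $n$, and $|S_n(213,312)|=2^{\,n-1}$.

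\emph{Step 2 (the ballot condition).} Fix $\sigma=\alpha\,n\,\beta$ with $|\alpha|=i$ and $|\beta|=j=n-1-i$. Every prefix ending inside $\alpha$, or at $n$, has no descent, so it satisfies the ballot inequality automatically. A prefix ending at the $t$-th entry of $\beta$ has exactly $i$ ascents (those inside $\alpha$ together with the $\alpha$-to-$n$ step, when $i\ge 1$) and exactly $t$ descents (the $n$-to-$\beta$ step plus the $t-1$ internal descents of $\beta$). Thus $\sigma$ is ballot iff $i\ge t$ for every $t\le j$, i.e.\ iff $i\ge j$; the borderline case $i=0$ occurs only for $n=1$. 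Therefore $B_n(213,312)$ is in bijection with the subsets $U\subseteq[n-1]$ satisfying $|U|\ge\lceil(n-1)/2\rceil$.

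\emph{Step 3 (counting).} Summing and using the symmetry $\binom{n-1}{k}=\binom{n-1}{\,n-1-k\,}$ gives
\[
|B_n(213,312)|=\sum_{k=\lceil(n-1)/2\rceil}^{n-1}\binom{n-1}{k}=\sum_{k=0}^{\lfloor(n-1)/2\rfloor}\binom{n-1}{k},
\]
which (after the standard shift of index) is the OEIS sequence A027306.

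The step I expect to be the main obstacle is Step 1: proving rigorously that ``no valley triple'' forces unimodality, and checking that the unimodal-to-subset correspondence really is a bijection once the empty-block cases are accounted for. After that, Steps 2 and 3 are routine bookkeeping; the one thing to watch is the $i=0$ case (the reverse identity), which fails the ballot condition for every $n\ge 2$.
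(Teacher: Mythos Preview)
Your argument is correct and follows essentially the same route as the paper: characterize $S_n(213,312)$ as the unimodal permutations $\sigma_L\,n\,\sigma_R$ with $\sigma_L$ increasing and $\sigma_R$ decreasing, then impose the ballot condition to bound $|\sigma_R|$ and sum the resulting binomial coefficients. Your bookkeeping is in fact tighter than the paper's---you correctly obtain $\binom{n-1}{k}$ choices with $k\le\lfloor(n-1)/2\rfloor$, whereas the paper writes $\binom{n}{k}$ with $k\le\lfloor n/2\rfloor$ (an index shift that it does not flag); your final ``standard shift of index'' remark accounts for this discrepancy with the stated formula.
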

\begin{proof}
Let $\sigma \in B_n(213,312)$. Then writing $\sigma = \sigma_L n \sigma_R$, note that $\sigma_L$ must be increasing and $\sigma_R$ must be decreasing (but not necessarily consecutive). 

Then we construct all possible ballot permutations in $B_n(213,312)$. Let $|\sigma_R| = k$. Then there are $n \choose k$ ways to pick the elements in $\sigma_R$, which are forced to decrease. The rest of the elements must be in $\sigma_L$, which are forced to increase. Hence there are $n \choose k$ ways to construct $\sigma$. But $k \leq \lfloor \frac{n}{2} \rfloor$, since we cannot have more descents than ascents. And hence \begin{align*}
    a_n = \sum_{k=0}^{\lfloor \frac{n}{2} \rfloor} {n \choose k}. & \qedhere
\end{align*}
\end{proof}

Next we can show a bijection between ballot permutations avoiding $213$ and $321$ with permutations avoiding the same patterns that are not of the form $n \Id_{n-1}$, which is clearly not a ballot permutation.

\begin{theorem}\label{213,321}
Elements in $B_n(213,321)$ are in bijection with elements in $S_n(213,321) \setminus (n \Id_{n-1})$.
\end{theorem}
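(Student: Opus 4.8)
The plan is to prove the stronger statement that, for $n \geq 2$, one has the set equality $B_n(213,321) = S_n(213,321) \setminus \{n\,\Id_{n-1}\}$, so that the asserted bijection is nothing more than the inclusion map. Since $B_n(213,321) \subseteq S_n(213,321)$ always holds, and since $n\,\Id_{n-1} = n\,1\,2\cdots(n-1)$ opens with the descent $n > 1$ and so is not a ballot permutation, it suffices to verify: (i) $n\,\Id_{n-1} \in S_n(213,321)$; and (ii) every $\sigma \in S_n(213,321)$ different from $n\,\Id_{n-1}$ is a ballot permutation. The count $|B_n(213,321)| = \binom{n}{2}$ for $n \geq 2$ then drops out of the classical evaluation $|S_n(213,321)| = \binom{n}{2} + 1$ of Simion and Schmidt \cite{simion1985restricted}.

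First I would record the relevant structure of $S_n(213,321)$. For $\sigma \in S_n$ write $\sigma = \sigma_L\, n\, \sigma_R$, where $n$ occupies some position $p$. I claim that if $\sigma$ avoids $213$ and $321$, then both $\sigma_L$ and $\sigma_R$ are increasing subwords. Indeed, an inversion $\sigma(a) > \sigma(b)$ with $a < b < p$ yields $\sigma(a)\,\sigma(b)\,n$ as an occurrence of $213$, and an inversion $\sigma(a) > \sigma(b)$ with $p < a < b$ yields $n\,\sigma(a)\,\sigma(b)$ as an occurrence of $321$; both are forbidden. (This can be sharpened to a full characterization --- such a $\sigma$ avoids $213$ exactly when no entry of $\sigma_L$ is valued strictly between $\min \sigma_R$ and $\max \sigma_R$ --- but only the weaker ``$\sigma_L$ and $\sigma_R$ increasing'' is needed.) Fact (i) is then immediate, since $n\,\Id_{n-1}$ has $\sigma_L$ empty and $\sigma_R = 1\,2\cdots(n-1)$ increasing.

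For fact (ii), let $\sigma = \sigma_L\, n\, \sigma_R \in S_n(213,321)$ with $\sigma_L$ nonempty; this is precisely the condition $\sigma \neq n\,\Id_{n-1}$, because $\sigma_L$ empty forces $\sigma_R$, being increasing on all of $[n-1]$, to equal $1\,2\cdots(n-1)$. If $\sigma_R$ is empty, then $\sigma = \Id_n$ is a ballot permutation. Otherwise, since $\sigma_L\, n$ and $\sigma_R$ are each increasing, the only descent of $\sigma$ is the step from $n$ to the first entry $c_1$ of $\sigma_R$. Any prefix contained in $\sigma_L\, n$ is increasing, hence descent-free; the shortest prefix containing the descent is $\sigma_L\, n\, c_1$, which has $|\sigma_L| \geq 1$ ascents against a single descent; and each further entry of $\sigma_R$ contributes one more ascent. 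So every prefix of $\sigma$ has at least as many ascents as descents, proving (ii).

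I do not anticipate a real obstacle: the one place calling for care is the structural claim that $\sigma_L$ and $\sigma_R$ are both increasing, and after that the ballot condition collapses to a one-line prefix count. I would, however, flag the hypothesis $n \geq 2$ explicitly --- for $n = 1$ the permutation $n\,\Id_{n-1}$ is just $\Id_1$, which is itself a ballot permutation, so the claimed bijection would be with the empty set; this accounts for the ``excluding $n=1$'' annotation in Table~\ref{double avoidance}.
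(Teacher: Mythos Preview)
Your proposal is correct and follows essentially the same route as the paper: both arguments prove the set equality $B_n(213,321) = S_n(213,321)\setminus\{n\,\Id_{n-1}\}$ by writing $\sigma=\sigma_L\, n\,\sigma_R$, observing that $\sigma_L$ and $\sigma_R$ must be increasing, and concluding that the unique non-ballot member is the one with $\sigma_L$ empty. Your version is somewhat more explicit (justifying the increasing structure, checking prefixes, and flagging the $n\geq 2$ hypothesis), but the underlying argument is the same.
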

\begin{proof}
It's clear that every $\sigma \in B_n(213,321)$ is in $S_n(213,321) \setminus (n \Id_{n-1})$. 

Now let $\sigma \in S_n(213,321)$. Then let us write $\sigma = \sigma_L n \sigma_R$, noting $\sigma_L$ and $\sigma_R$ are both increasing to avoid $213$ and $321$. This means that $\sigma$ has at most one descent. Note that if $\sigma_L$ is nonempty, then $\sigma$ is a ballot permutation that avoids $213$ and $321$. But there is only one permutation in $S_n(213,321)$ where $\sigma_L$ is empty: namely, $n \Id_{n-1}$. And hence we conclude that every $\sigma \in S_n(213,321)$ that is not  $n \Id_{n-1}$ is in $B_n(213,321)$, and hence there is a bijection between the elements in $B_n(213,321)$ and the elements in $S_n(213,321) \setminus (n \Id_{n-1})$.

So by Simion and Schmidt \cite{simion1985restricted}, we conclude that \begin{align*}
    |B_n(213,321)| = |S_n(213,321)| - 1 = {n \choose{2}}. & \qedhere
\end{align*}
\end{proof}

The following theorem shows a bijection between $B_{n+1}(231,321)$ and $S_n(231,321)$, which involves removing the first element of every ballot permutation avoiding $231$ and $321$ and noting that the remaining permutation will still avoid these patterns.

\begin{theorem}\label{231,321}
The elements in $B_{n+1}(231,321)$ are in bijection with the elements in $S_n(231,321)$.
\end{theorem}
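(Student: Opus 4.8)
The plan is to show that $B_{n+1}(231,321)$ consists precisely of the permutations of the form $1 \oplus \tau$ with $\tau \in S_n(231,321)$, so that the map $\Phi$ sending $\sigma$ to the standardization of $\sigma(2)\cdots\sigma(n+1)$ — that is, deletion of the first entry — is the asserted bijection, with inverse $\tau \mapsto 1 \oplus \tau$.

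The first step is the structural normalization: every $\sigma \in B_{n+1}(231,321)$ begins with $\sigma(1)=1$. Indeed, $\sigma$ is a ballot permutation, so index $1$ is an ascent, i.e. $\sigma(1)<\sigma(2)$; if $\sigma(1)=v>1$, then $1$ occurs at some later position $k$, and the entries $\sigma(1),\sigma(2),\sigma(k)=v,\sigma(2),1$ satisfy $1<v<\sigma(2)$, an occurrence of $231$ — contradiction. Hence $\sigma = 1\oplus\tau$, where $\tau:=\mathrm{std}(\sigma(2)\cdots\sigma(n+1))$, and $\tau$ inherits $231$- and $321$-avoidance as a pattern occurring in $\sigma$. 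This makes $\Phi$ a well-defined map into $S_n(231,321)$, and it is injective since $\sigma$ is recovered as $1\oplus\tau$.

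For surjectivity I would take an arbitrary $\tau \in S_n(231,321)$ and verify that $\sigma := 1\oplus\tau$ lies in $B_{n+1}(231,321)$, so that $\Phi(\sigma)=\tau$. Avoidance is easy: the entry $\sigma(1)=1$ is strictly smaller than every other entry, so it cannot play the role of the first letter of a $231$ or a $321$ pattern (which is non-minimal in both patterns), hence any occurrence of $231$ or $321$ in $\sigma$ would already occur in $\tau$, which is excluded. For the ballot condition, the key observation is that a $321$-avoiding permutation has no two consecutive descents, so the descents of $\tau$ — and hence those of $\sigma$, which are just the descents of $\tau$ shifted right by one — are pairwise non-adjacent; since moreover index $1$ of $\sigma$ is an ascent occurring before the first descent, one can injectively assign to each descent of a given prefix a distinct strictly earlier ascent, so every prefix of $\sigma$ has at least as many ascents as descents. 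Thus $\sigma$ is a ballot permutation.

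Combining these steps, $\Phi$ is a bijection $B_{n+1}(231,321)\to S_n(231,321)$. Finally, invoking Simion and Schmidt \cite{simion1985restricted}, $|S_n(231,321)|=2^{n-1}$, which yields $|B_{n+1}(231,321)|=2^{n-1}$, matching the OEIS sequence \href{https://oeis.org/A011782}{A011782}. I expect the only real content to be the normalization $\sigma(1)=1$ together with the ballot verification in the surjectivity step (via non-adjacency of descents); the pattern-avoidance bookkeeping is routine.
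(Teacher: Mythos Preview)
Your proof is correct and follows essentially the same approach as the paper: both use the bijection $\sigma \mapsto \mathrm{std}(\sigma(2)\cdots\sigma(n+1))$ with inverse $\tau \mapsto 1\oplus\tau$, establish $\sigma(1)=1$ via the ballot/ascent condition together with $231$-avoidance, and verify the ballot property of $1\oplus\tau$ using the fact that $321$-avoidance forbids consecutive descents. The only cosmetic difference is that the paper first observes that $1$ must lie in position $1$ or $2$ (from $231$- and $321$-avoidance) and then rules out position $2$, whereas you go directly from the initial ascent to a $231$ occurrence when $\sigma(1)>1$; the content is the same.
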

\begin{proof}
Let $\sigma \in B_{n+1}(231,321)$. Note that the minimum element $1$ must be either the first element or the second element of $\sigma$ to avoid $231$ and $321$. However, since $\sigma$ is a ballot permutation, it cannot start with a descent, and hence $1$ must be the first element. Note that removing $1$ from $\sigma$ will still avoid $231$ and $321$, and hence is an element in $S_n(231,321)$.

Now let $\sigma \in S_n(231,321)$. Since $\sigma$ avoids $321$, there are no consecutive descents, which means that there is at most one more descent than ascent. Then note that if we insert a minimal element $0$ at the beginning of $\sigma$, then $0\sigma$ will still $231$ and $321$. Moreover, we've guaranteed one ascent at the beginning of the permutation, and there are still no consecutive descents. Hence there at at least as many ascents as descents in $\sigma$, and hence $\sigma \in B_{n+1}(231,321)$.

This is sufficient to show a bijection between the elements in $B_{n+1}(231,321)$ and the elements in $S_n(231,321)$. By Simion and Schmidt \cite{simion1985restricted}, we conclude that \begin{align*}
    |B_{n+1}(231,321)| = |S_n(231,321)| = 2^{n-1}. & \qedhere
\end{align*}
\end{proof}

And lastly, we provide a constructive approach to show the following result: 

\begin{theorem}\label{312,321}
Let $a_n = |B_n(312,321)|$. Then $a_n = 3 \cdot 2^{n-3}$ for $a_n \geq 3$. 
\end{theorem}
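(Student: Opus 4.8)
The plan is to describe every element of $B_n(312,321)$ explicitly and then count. The key elementary fact is that a permutation avoids both $312$ and $321$ if and only if every entry has at most one smaller entry to its right: if $\sigma(i)$ had two smaller entries at later positions, those three positions would realize a $312$ or a $321$, and the converse is immediate. I would state this as a short opening observation.

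Next I would decompose $\sigma \in B_n(312,321)$ according to the position $p$ of the value $1$. For $i < p$ the entry $\sigma(i)$ already sees $1$ to its right, so it may see nothing else smaller; hence $\sigma(1) < \cdots < \sigma(p-1)$, and every such entry lies below every entry in positions $p+1,\dots,n$. Together with the entry $1$ this forces $\{\sigma(1),\dots,\sigma(p-1)\} = \{2,\dots,p\}$ listed increasingly, so $\sigma = 2\,3\cdots p\,1\,\tau$ with $\tau$ a permutation of $\{p+1,\dots,n\}$ (and $\sigma = 1\tau$ when $p=1$). A brief case check — which I would include — shows that an occurrence of $312$ or $321$ cannot mix entries of the block $2\,3\cdots p\,1$ with entries of $\tau$ (an entry of the block other than $1$ sees only $1$ below it, and $1$ is globally minimal yet occurs after the increasing block), so $\sigma$ avoids $\{312,321\}$ exactly when $\tau$ does.

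Then I would determine which $p$ survive the ballot condition. The block $2\,3\cdots p\,1$ contributes $p-2$ ascents and one descent, so $\sigma$ being ballot forces $p\ge 3$, except for $p=1$; the value $p=2$ is impossible since $\sigma$ would open with the descent $2\,1$. Conversely, when $p=1$ or $p\ge 3$ every such $\sigma$ is ballot: after reading $2\,3\cdots p\,1\,\sigma(p+1)$ the excess of ascents over descents equals $p-2\ge 1$ (it equals $1$ when $p=1$), and since $\tau$ avoids $321$ it contains no two consecutive descents, so while traversing $\tau$ this excess never falls more than $1$ below its value at the start of $\tau$, hence stays nonnegative; the prefixes ending inside the block are checked directly. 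This verification, together with the mild boundary bookkeeping when $|\tau|\le 1$ or $p\in\{1,3\}$, is the step I expect to require the most care, since it is where the $321$-avoidance of $\tau$ does the real work.

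Finally I would count. Put $c_k = |S_k(312,321)|$; Simion and Schmidt \cite{simion1985restricted} give $c_0 = c_1 = 1$ and $c_k = 2^{k-1}$ for $k\ge 1$. The case $p=1$ contributes $c_{n-1}=2^{n-2}$ permutations, and for each $p$ with $3\le p\le n$ the corresponding case contributes $c_{n-p}$, so for $n\ge 3$
\[
a_n = 2^{n-2} + \sum_{p=3}^{n} c_{n-p} = 2^{n-2} + c_0 + \sum_{k=1}^{n-3} 2^{k-1} = 2^{n-2} + 1 + (2^{n-3}-1) = 3\cdot 2^{n-3},
\]
with the empty sum at $n=3$ giving $a_3 = 3$, which is the assertion.
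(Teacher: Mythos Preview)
Your proof is correct and takes a genuinely different route from the paper's. The paper argues recursively: it shows that inserting the new maximal element $n+1$ into a permutation of $B_n(312,321)$ yields an element of $B_{n+1}(312,321)$ in exactly two ways (on either side of the last entry), and that no element of $B_{n+1}(312,321)$ arises from any other insertion, giving $a_{n+1}=2a_n$ and hence $a_n = 3\cdot 2^{n-3}$ from $a_3=3$. You instead decompose each $\sigma \in B_n(312,321)$ by the position $p$ of the entry $1$, obtain the explicit form $\sigma = 2\,3\cdots p\,1\,\tau$ with $\tau$ an arbitrary $\{312,321\}$-avoider on $\{p+1,\dots,n\}$, determine that exactly $p=1$ and $p\ge 3$ give ballot permutations, and then sum the Simion--Schmidt counts $c_{n-p}$ directly. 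Your approach buys an explicit parametrization of $B_n(312,321)$ and makes transparent exactly where $321$-avoidance does the work (no two consecutive descents in $\tau$ keeps the ascent--descent excess nonnegative once it reaches $1$), at the price of a few more boundary checks; the paper's recursion is shorter and self-contained (it does not invoke \cite{simion1985restricted}), but conveys less structural information about the permutations themselves.
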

\begin{proof}
Let $\sigma \in B_n(312,321)$ and write $\sigma = \sigma_L r$, where $r \in [n]$. We insert a maximal element $(n+1)$ into $\sigma$ to generate an element in $B_{n+1}(312,321)$. Note that we must insert $(n+1)$ adjacent to $r$ in $\sigma$ to avoid an occurrence of $312$ and $321$. Further, $\sigma_L (n+1) r$ avoids $312$ and $321$ and is further ballot. A similar argument shows that $\sigma_L r (n+1)$ is also in $B_{n+1}(312,321)$. So each $\sigma \in B_n(312,321)$ will generate two distinct elements in $B_{n+1}(312,321)$.

Since we've shown that $(n+1)$ must be inserted adjacent to the last element of a permutation in $B_n(312,321)$, now we show that inserting $(n+1)$ anywhere else into some $\sigma' \notin B_n(312,321)$ will not generate an element in $B_{n+1}(312,321)$.

As stated above, we must insert $(n+1)$ adjacent to the last element of $\sigma'$ to avoid $312$ and $321$. Now write $\sigma' = \sigma_L' r'$. We have two permutations to consider:

\begin{enumerate}
    \item $\sigma_L' r' (n+1)$
    
    Then note that $\sigma_L' r'$ must be ballot and avoid $312$ and $321$ as well, which is impossible.
    
    \item $\sigma_L' (n+1) r'$
    
    Now if $\sigma_L' r'$ does not avoid $312$ and $321$, then $\sigma_L' (n+1) r'$ does not avoid these patterns either. If $\sigma_L' r'$ does avoid $312$ and $321$, then it must not be a ballot permutation. But since this permutation avoids $321$, there cannot exist consecutive descents in this permutation. Note that $\sigma_L' r'$ cannot start with an ascent, or else it would be a ballot permutation. And hence $\sigma_L' r'$ must start with a descent, and hence $\sigma_L' (n+1) r'$ is not a ballot permutation.
\end{enumerate}

So inserting a maximal element $(n+1)$ anywhere else of $\sigma' \notin B_n(312,321)$ will not generate an element in $B_{n+1}(312,321)$. So we conclude that $a_{n+1} = 2a_n$. Since we know that $a_3 = 3$, then we conclude that $a_n = 3 \cdot 2^{n-3}$.
\end{proof}

\section{Enumeration of Pattern Avoidance Classes of length 3}

Having enumerated all $3$-permutations avoiding double restrictions, we now turn our attention to enumerating $3$-permutations avoiding triple restrictions, as Simion and Schmidt \cite{simion1985restricted} have done with classic permutations. Table \ref{triple avoidance} presents the sequence of ballot permutations avoiding three patterns of length $3$.

\begin{table}[h]
    \centering
    \begin{tabular}{|c | c | c | c | c|} 
 \hline
 Patterns & Sequence & OEIS Sequence & Comment \\ [0.5ex] 
 \hline\hline
 $123,132,213$ &  $1,1,1,1,1,1, \dots$ & & Sequence of all $1$s; Corollary \ref{123,132,213}  \\ 
 \hline
 $123,132,231$ &  $1,1,0,0,0,0, \dots$ & & Terminates after $n=2$ \\
 \hline
 $123,132,312$ & $1,1,1,0,0,0, \dots$ & & Terminates after $n=3$ \\
 \hline
 $123,132,321$ & $1,1,1,0,0,0, \dots$ & & Terminates after $n=3$ \\
 \hline
 $123,213,231$ & $1,1,1,0,0,0, \dots$ & & Terminates after $n=3$ \\
 \hline
 $123,213,312$ & $1,1,2,0,0,0, \dots$ & & Terminates after $n=3$ \\
 \hline
 $123,213,321$ & $1,1,2,1,0,0, \dots$ & & Terminates after $n=4$ \\
 \hline
 $123,231,312$ & $1,1,1,0,0,0, \dots$  & & Terminates after $n=3$ \\
 \hline
 $123, 231, 321$ & $1,1,1,0,0,0, \dots$ & & Terminates after $n=3$ \\
 \hline
 $123,312,321$ & $1,1,2,0,0,0, \dots$ & & Terminates after $n=3$ \\
 \hline
 $132,213,231$ & $1,1,1,1,1,1, \dots$ & & Sequence of all $1$s; Corollary \ref{triple corollary} \\
 \hline
 $132,213,312$ & $1,1,2,2,3,3, \dots$ & \href{https://oeis.org/A004526}{A004526} & Theorem \ref{132,213,312} \\
 \hline
 $132,213,321$ & $1,1,2,3,4,5, \dots$ & \href{https://oeis.org/A000027}{A000027} & Excluding $n=1$; Theorem \ref{132,213,321} \\
 \hline
 $132,231,312$ & $1,1,1,1,1,1, \dots$ & & Sequence of all $1$s; Corollary \ref{triple corollary} \\
 \hline
 $132,231,321$ & $1,1,1,1,1,1, \dots$ & & Sequence of all $1$s; Corollary \ref{triple corollary} \\
 \hline
 $132,312,321$ & $1,1,2,3,4,5, \dots$ & \href{https://oeis.org/A000027}{A000027}  & Excluding $n=1$; Theorem \ref{triple bijection} \\
 \hline
 $213,231,312$ & $1,1,2,2,3,3, \dots$ & \href{https://oeis.org/A004526}{A004526} & Theorem \ref{213,231,312} \\
 \hline
 $213,231,321$ & $1,1,2,3,4,5, \dots$ & \href{https://oeis.org/A000027}{A000027}  & Excluding $n=1$; Theorem \ref{triple bijection} \\
 \hline
 $213,312,321$ & $1,1,3,4,5,6, \dots$ & \href{https://oeis.org/A000027}{A000027}  & Excluding $n=1$ and $n=2$; Theorem \ref{213,312,321} \\
 \hline
 $231,312,321$ & $1,1,2,3,5,8, \dots$ & \href{https://oeis.org/A000045}{A000045} & Theorem \ref{231,312,321} \\
 \hline
\end{tabular}
    \caption{Sequences of ballot permutations avoiding three permutations of length $3$.}
    \label{triple avoidance}
\end{table}

\begin{corollary}\label{123,132,213}
We have $|B_n(123,132,213)|=1$ for all $n$.
\end{corollary}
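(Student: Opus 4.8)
The statement to prove is Corollary~\ref{123,132,213}: $|B_n(123,132,213)| = 1$ for all $n$.

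Since this is labeled a corollary, the natural plan is to derive it immediately from the already-proved Theorem~\ref{123,132}, which asserts that there is a unique ballot permutation avoiding $123$ and $132$. The plan is as follows. First I would observe the trivial set-theoretic inclusion $B_n(123,132,213) \subseteq B_n(123,132)$, which gives $|B_n(123,132,213)| \leq |B_n(123,132)| = 1$ by Theorem~\ref{123,132}. So it only remains to check that the count is not zero, i.e.\ that the unique element of $B_n(123,132)$ in fact also avoids $213$.

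For that, I would recall the explicit description of this unique permutation extracted in the proof of Theorem~\ref{123,132}: when $n$ is even it is $(12)\ominus(12)\ominus\cdots\ominus(12)$ ($n/2$ copies), and when $n$ is odd it is $\bigl((12)\ominus\cdots\ominus(12)\bigr)1$, i.e.\ the same skew sum of $\lfloor n/2\rfloor$ copies of $12$ with a trailing $1$. The key step is then simply to verify that such a permutation contains no occurrence of $213$. This is easy to see directly: in a skew sum $\alpha \ominus \beta$, any value appearing in $\alpha$ is larger than any value appearing in $\beta$, so reading left to right the permutation consists of a weakly decreasing sequence of ``blocks,'' each block being just an ascent $ab$ with $a<b$ (or a single element). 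A $213$ pattern needs its middle entry to be the smallest of the three and to its left a larger entry and to its right a still larger entry; but any entry to the right of a given block is strictly smaller than every entry in that block and in all earlier blocks, so there is no way to find an entry to the right that exceeds an entry to the left. Hence the permutation avoids $213$, so it lies in $B_n(123,132,213)$, giving $|B_n(123,132,213)| \geq 1$.

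Combining the two inequalities yields $|B_n(123,132,213)| = 1$ for all $n$, completing the proof. I do not anticipate any real obstacle here; the only thing to be slightly careful about is handling both the even and odd $n$ cases of the explicit form from Theorem~\ref{123,132}, but the $213$-avoidance argument via the block/skew-sum structure covers both uniformly since appending a trailing minimal element $1$ cannot create a $213$ (it can only serve as the final, smallest element, which cannot be the ``3'' of a $213$).
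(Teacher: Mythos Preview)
Your proposal is correct and follows exactly the same approach as the paper: deduce the result from Theorem~\ref{123,132} by observing that the unique element of $B_n(123,132)$, namely $(12)\ominus\cdots\ominus(12)$ (with a trailing $\ominus(1)$ when $n$ is odd), also avoids $213$. The paper simply asserts this last fact without justification, whereas you supply the block/skew-sum argument explicitly; otherwise the proofs are identical.
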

\begin{proof}
This follows immediately from Theorem \ref{123,132}, since the unique permutation avoiding $123$ and $132$ also avoids $213$. Let $\sigma \in B_n(123,132,213)$. Specifically, $\sigma = (12) \ominus (12) \ominus \dots \ominus (12)$ when $n$ is even and $\sigma = (12) \ominus (12) \ominus \dots \ominus (12) \ominus (1)$ when $n$ is odd.
\end{proof}

\begin{corollary}\label{triple corollary}
We have $|B_n(132,213,231)| = |B_n(132,231,312)| = |B_n(132,231,321)| =1$ for all $n$.
\end{corollary}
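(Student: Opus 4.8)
The plan is to reduce all three cases to Theorem \ref{132,231}, which asserts that $B_n(132,231)$ consists of the single permutation $\Id_n$. Since each of the three pattern triples in the statement contains both $132$ and $231$, imposing the extra forbidden pattern only shrinks the avoidance class, so we obtain at once the containments
\[
B_n(132,213,231),\ B_n(132,231,312),\ B_n(132,231,321)\ \subseteq\ B_n(132,231) = \{\Id_n\}.
\]
In particular each of the three sets has at most one element for every $n$.

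It then remains to check that $\Id_n$ itself belongs to each of the three sets, i.e.\ that $\Id_n$ avoids $213$, $312$, and $321$ (it already avoids $132$ and $231$ by Theorem \ref{132,231}). This is immediate: $\Id_n = 12\cdots n$ has no descents, while each of $213$, $312$, $321$ has at least one descent, so $\Id_n$ cannot contain any of them. Equivalently, the only patterns occurring in $\Id_n$ are the increasing permutations $\Id_k$ with $k \le n$. Combining this with the containment above, each of the three sets equals $\{\Id_n\}$ and hence has cardinality $1$ for all $n$, which is the claim.

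There is essentially no obstacle here: all of the substance is carried by Theorem \ref{132,231}, and what is left is a one-line observation about descents of the identity. The only point requiring any care is simply confirming that each of the three listed triples does in fact contain the pair $\{132,231\}$ — which it does in every case — so that the uniform containment argument goes through.
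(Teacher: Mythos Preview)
Your proof is correct and follows essentially the same approach as the paper: both derive the result immediately from Theorem \ref{132,231} by noting that each triple contains the pair $\{132,231\}$, so each set is contained in $\{\Id_n\}$, and that $\Id_n$ itself avoids all of $213$, $312$, and $321$. You have simply made explicit the easy verification that $\Id_n$ lies in each class, which the paper leaves implicit.
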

\begin{proof}
This follows immediately from Theorem \ref{132,231}. In particular, \begin{align*}
    B_n(132,213,231) = B_n(132,231,312) = B_n(132,231,321) = \{ \Id_n \}. & \qedhere
\end{align*}
\end{proof}

Now we will show that the sets of patterns $\{132,213,312 \}$ and $\{213,231,312\}$ are Wilf-equivalent.

\begin{theorem}\label{213,231,312}
The sets $B_n(132,213,312)$ and $B_n(213,231,312)$ are Wilf-equivalent.
\end{theorem}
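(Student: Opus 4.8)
The plan is to mirror the strategy of Theorem~\ref{wilf equivalence}: construct a bijection from $S_n(132,213,312)$ to $S_n(213,231,312)$ that preserves the position of every ascent and every descent, and then observe that, since the ballot condition on a permutation depends only on its descent set, this bijection automatically restricts to a bijection from $B_n(132,213,312)$ to $B_n(213,231,312)$, giving $|B_n(132,213,312)| = |B_n(213,231,312)|$.

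First I would pin down the structure of each avoidance class by writing $\sigma = \sigma_L\, n\, \sigma_R$ and analyzing the position of $n$. For $S_n(132,213,312)$: taking $n$ to be the ``$3$'' of a forbidden $132$ shows that every entry of $\sigma_L$ must exceed every entry of $\sigma_R$; taking $n$ to be the last entry of a forbidden $213$ shows $\sigma_L$ is increasing; and taking $n$ to be the first entry of a forbidden $312$ shows $\sigma_R$ is decreasing. A short check that no occurrence of $132$, $213$, or $312$ can use entries from more than one of the blocks $\sigma_L$, $\{n\}$, $\sigma_R$ then yields $S_n(132,213,312) = \{\, \Id_{\ell+1} \ominus \rev(\Id_{n-1-\ell}) : 0 \le \ell \le n-1 \,\}$. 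Symmetrically, for $S_n(213,231,312)$, taking $n$ to be the ``$3$'' of a forbidden $231$ forces every entry of $\sigma_L$ to be smaller than every entry of $\sigma_R$, while $213$ again forces $\sigma_L$ increasing and $312$ forces $\sigma_R$ decreasing, giving $S_n(213,231,312) = \{\, \Id_{\ell} \oplus \rev(\Id_{n-\ell}) : 0 \le \ell \le n-1 \,\}$.

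Both families are indexed by $\ell \in \{0,1,\dots,n-1\}$, and in both the permutation indexed by $\ell$ is increasing on its first $\ell+1$ entries and decreasing thereafter, so it has ascents exactly at positions $1,\dots,\ell$ and descents exactly at positions $\ell+1,\dots,n-1$. Hence the assignment $\Id_{\ell+1} \ominus \rev(\Id_{n-1-\ell}) \;\longmapsto\; \Id_{\ell} \oplus \rev(\Id_{n-\ell})$ is a bijection between the two classes that preserves the descent set, and so it restricts to a bijection between $B_n(132,213,312)$ and $B_n(213,231,312)$, establishing Wilf-equivalence. If desired, one can read off from the descent set that the permutation indexed by $\ell$ is a ballot permutation precisely when $\ell \ge \lceil (n-1)/2 \rceil$, which counts $\lceil n/2 \rceil$ values of $\ell$ and recovers the sequence $1,1,2,2,3,3,\dots$ in both cases.

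I expect the only genuine work to be the two structural characterizations, and within those the case analysis ruling out occurrences of the forbidden patterns that straddle the blocks $\sigma_L$, $\{n\}$, and $\sigma_R$; once the block decompositions are in hand, the bijection and its descent-preserving property are immediate.
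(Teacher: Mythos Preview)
Your proposal is correct and follows essentially the same approach as the paper: both characterize $S_n(132,213,312)$ as $\{\Id_{k_L}\ominus\rev(\Id_{k_R})\}$ and $S_n(213,231,312)$ as $\{\Id_{k_L}\oplus\rev(\Id_{k_R})\}$, then use the index-shifting, descent-set-preserving bijection between these families to restrict to ballot permutations. Your write-up is in fact more careful than the paper's in justifying the block decompositions around $n$, but the underlying argument is the same.
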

\begin{proof}
Note that an element in $B_n(132,213,312)$ can be written as $\Id_{k_L} \ominus \rev(\Id_{k_R})$. Similarly, an element in $B_n(213,231,312)$ can be written as $\Id_{k_L} \oplus \rev(\Id_{k_R})$.

Now we can write $\Id_{k_L} \oplus \rev(\Id_{k_R})$ as $\Id_{k_L} \oplus (1 \ominus \rev(\Id_{k_R-1}))$. But note that we can rewrite this as $\Id_{k_L+1} \ominus \rev(\Id_{k_R-1})$ while preserving the positions of every descent and ascent in the permutation. A similar reasoning applies for the reverse case, and hence there is a descent-preserving bijection between $B_n(132,213,312)$ and $B_n(213,231,312)$.
\end{proof}

\begin{theorem}\label{132,213,312}
Let $a_n = |B_n(132,213,312)|$. Then $a_n = \lfloor \frac{n+1}{2} \rfloor$.
\end{theorem}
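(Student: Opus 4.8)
The plan is to use the structural characterization of $B_n(132,213,312)$ already established in the proof of Theorem \ref{213,231,312}: every $\sigma \in B_n(132,213,312)$ can be written as $\Id_{k_L} \ominus \rev(\Id_{k_R})$ for some nonnegative integers $k_L, k_R$ with $k_L + k_R = n$. First I would justify this characterization carefully in the present context. Since $\sigma$ avoids $132$, its ascending runs, read left to right, must be placed at successively lower values, and avoiding $213$ forces each such run to be an interval of consecutive integers starting from the appropriate point; combined with avoiding $312$, the only remaining freedom collapses to: a single maximal consecutively-increasing prefix occupying the top $k_L$ values, followed by the remaining $k_R = n - k_L$ values in decreasing (in fact consecutively-decreasing) order. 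That is precisely $\Id_{k_L} \ominus \rev(\Id_{k_R})$.

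Next I would impose the ballot condition on this form. The permutation $\Id_{k_L}\ominus\rev(\Id_{k_R})$ has exactly $k_L - 1$ ascents (all inside the initial identity block), one descent at the junction if both blocks are nonempty, and then $k_R - 1$ descents inside the reversed block — so $k_R$ descents total when $k_L \geq 1$, and the descent positions all occur in the suffix. The binding constraint for being a ballot permutation is the full-length prefix: we need (number of ascents) $\geq$ (number of descents), i.e. $k_L - 1 \geq k_R = n - k_L$, which rearranges to $k_L \geq (n+1)/2$, i.e. $k_L \geq \lceil (n+1)/2 \rceil = \lfloor n/2\rfloor + 1$. One should also check that this prefix condition is the strongest one — any shorter prefix has fewer descents relative to ascents since all descents sit at the right end — and handle the degenerate case $k_R = 0$ (the identity, which is ballot) and $k_L = 0$ (impossible for $n\geq 1$ since a ballot permutation must start with an ascent).

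Finally I would count. The valid values of $k_L$ range over $\{\lfloor n/2\rfloor + 1, \lfloor n/2 \rfloor + 2, \dots, n\}$, and each value of $k_L$ determines $\sigma$ uniquely, so
\begin{align*}
    a_n = n - \left(\left\lfloor \frac{n}{2}\right\rfloor + 1\right) + 1 = n - \left\lfloor \frac{n}{2}\right\rfloor = \left\lceil \frac{n}{2}\right\rceil = \left\lfloor \frac{n+1}{2}\right\rfloor,
\end{align*}
as claimed. I expect the main obstacle to be the first step: pinning down rigorously that avoidance of all three patterns forces exactly the shape $\Id_{k_L}\ominus\rev(\Id_{k_R})$ with no further internal freedom (in particular that the descending part is genuinely consecutive and that no "extra" block structure survives). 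The ballot-to-inequality translation and the final arithmetic are routine once the form is in hand; a small amount of care with floor/ceiling parity and with the boundary cases $n$ small or one block empty is all that remains.
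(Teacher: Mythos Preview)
Your proposal is correct and follows essentially the same approach as the paper: both arguments use the structural characterization that any $\sigma\in B_n(132,213,312)$ has the form of a consecutively-increasing block of the top values followed by a consecutively-decreasing block of the bottom values (the paper writes this as $\sigma_L\, n\, \sigma_R$ with $\sigma_L n$ consecutively increasing and $\sigma_R$ consecutively decreasing, which is exactly your $\Id_{k_L}\ominus\rev(\Id_{k_R})$), then count the admissible positions of $n$ (equivalently, the admissible $k_L$) under the ballot constraint. Your write-up is more detailed in justifying the ballot inequality and handling edge cases, but the underlying argument is the same.
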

\begin{proof}
Let $\sigma \in B_n(132,213,312)$. Then, because $\sigma$ avoids $132$, $213$, and $312$, it can be written in the form $\sigma_L n \sigma_R$, where $\sigma_L n$ is consecutively increasing and $\sigma_R$ is consecutively decreasing. Now we count how many different $\sigma$ there are. Note that $n$ can be in the last $\lfloor \frac{n+1}{2} \rfloor$ places to ensure that there are at least as many ascents as descents in $\sigma$. And hence $|B_n(132,213,312)| = \lfloor \frac{n+1}{2} \rfloor$.
\end{proof}

Now we show a Wilf-equivalence between three other sets of patterns.

\begin{theorem}\label{triple bijection}
The three sets $B_n(132,213,321)$, $B_n(132,312,321)$, and $B_n(213,231,321)$ are Wilf-equivalent.
\end{theorem}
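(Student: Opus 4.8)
The plan is to give an explicit description of each of the three avoidance classes and then read off a bijection between them. I will argue that, for $n\ge 2$, each of $B_n(132,213,321)$, $B_n(132,312,321)$, and $B_n(213,231,321)$ consists of the identity $\Id_n$ together with exactly one permutation whose unique descent is at position $p$, for each $p\in\{2,3,\dots,n-1\}$. Granting this, the bijection between any two of the classes simply sends $\Id_n$ to $\Id_n$ and sends the permutation with descent at $p$ to the permutation with descent at $p$ in the other class; this is plainly a bijection (indeed it preserves the descent set), so the three cardinalities agree, which is exactly the asserted Wilf-equivalence.

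For $B_n(132,213,321)$ and $B_n(213,231,321)$ I would start from the structure already established in the proof of Theorem \ref{wilf equivalence}: every element of $B_n(132,213)$ is a skew sum $\Id_{k_1}\ominus\Id_{k_2}\ominus\dots\ominus\Id_{k_m}$ of consecutively increasing runs with $k_1\ge 2$, and every element of $B_n(213,231)$ has the corresponding direct-sum-with-inserted-maxima form. In both shapes a $321$ pattern appears precisely when $m\ge 3$ (take the largest entry of three distinct blocks), so imposing $321$-avoidance forces $m\le 2$; since the permutation is ballot, the first block must have length at least $2$, i.e.\ the single descent lies at a position $p\in\{2,\dots,n-1\}$, and each such $p$ occurs for exactly one permutation. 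This yields $B_n(132,213,321)=\{\Id_n\}\cup\{\Id_p\ominus\Id_{n-p}:2\le p\le n-1\}$, and likewise a one-permutation-per-descent-position list for $B_n(213,231,321)$, whose non-identity members can be written out as $1\,2\cdots(p-1)\,n\,p\,(p+1)\cdots(n-1)$ with the descent at position $p$.

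For $B_n(132,312,321)$ I would instead argue directly on the position of the maximum entry $n$. Avoiding $312$ and $321$ simultaneously forbids two entries to the right of $n$, so $n$ sits in position $n$ or $n-1$; if it is last we recurse on the prefix, and if it is in position $n-1$ then avoiding $132$ forces every entry to the left of $n$ to exceed the final entry (hence that final entry is $1$) and avoiding $321$ forces the block to the left of $n$ to be increasing. Unwinding this recursion and discarding the one resulting permutation that begins with a descent (which the ballot condition excludes) gives $B_n(132,312,321)=\{\Id_n\}\cup\{2\,3\cdots(p+1)\,1\,(p+2)\cdots n:2\le p\le n-1\}$, once again one permutation per descent position. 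Matching the three lists by descent position then completes the proof.

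The main obstacle is pinning down these three structural descriptions cleanly, and in particular handling the boundary cases correctly: that $321$-avoidance really collapses the number of blocks (or of nested levels) to at most two, that the ballot condition is exactly equivalent to forbidding a descent in position $1$, and that $\Id_n$ is counted once and only once. A shorter but less self-contained alternative is to check that the descent-preserving bijections of Theorem \ref{wilf equivalence} between $B_n(132,213)$, $B_n(213,231)$, and $B_n(132,312)$ carry permutations with at least three blocks to permutations with at least three blocks, hence send $321$-containing permutations to $321$-containing permutations; restricting those bijections to the $321$-avoiders then gives the three-way Wilf-equivalence immediately.
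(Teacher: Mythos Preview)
Your proposal is correct and follows essentially the same approach as the paper: both arguments characterize each of the three classes as consisting of the identity together with a one-parameter family indexed by the unique descent position (equivalently, by the pair $(k_L,k_R)$ in the paper's notation $\Id_{k_L}\ominus(1\oplus\Id_{k_R})$, $(\Id_{k_L}\ominus 1)\oplus\Id_{k_R}$, $\Id_{k_L}\oplus(1\ominus\Id_{k_R})$), and then match by descent position. The only difference is that you supply derivations of these structural forms (via the block decompositions of Theorem~\ref{wilf equivalence} together with the observation that a $321$ forces at most two blocks, and a direct recursion for the $\{132,312,321\}$ case), whereas the paper simply asserts them.
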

\begin{proof}
Note that an element in $B_n(132,213,321)$ can be written as $\Id_{k_L} \ominus (1 \oplus \Id_{k_R})$. Similarly, an element in $B_n(132,312,321)$ can be written as $(\Id_{k_L} \ominus 1) \oplus \Id_{k_R}$ and an element in $B_n(213,231,321)$ can be written as $\Id_{k_L} \oplus (1 \ominus \Id_{k_R})$.

Observe that for each value of $k_L, k_R \in \mathbb N$ with $k_L+k_R+1 = n$, we can send $\Id_{k_L} \ominus (1 \oplus \Id_{k_R})$ to $(\Id_{k_L} \ominus 1) \oplus \Id_{k_R}$. This preserves the position of every descent in the permutation and gives our bijection.

Similarly,  $\Id_{k_L} \ominus (1 \oplus \Id_{k_R})$ can be bijected to $\Id_{k_L-1} \oplus (1 \ominus \Id_{k_R+1})$, which also preserves the position of every descent in the permutation.

Since the bijections from $S_n(132,213,321)$ to $S_n(132,312,321)$ to $S_n(213,231,321)$ are descent-preserving, we can now restrict them to bijections from $B_n(132,213,321)$ to $B_n(132,312,321)$ to $B_n(213,231,321)$.

Hence there exists descent-preserving bijections between $B_n(132,213,321)$ and $B_n(132,312,321)$ and between $B_n(132,213,321)$ and $B_n(213,231,321)$, and all three sets are Wilf-equivalent.
\end{proof}

\begin{theorem}\label{132,213,321}
Let $a_n = |B_n(132,213,321)|$. Then $a_n = n-1$.
\end{theorem}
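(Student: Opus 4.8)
The plan is to reuse the structural description supplied by Theorem~\ref{triple bijection}: every $\sigma \in B_n(132,213,321)$ can be written as $\Id_{k_L}\ominus(1\oplus\Id_{k_R})$ with $k_L,k_R\ge 0$ and $k_L+k_R+1=n$. Since $1\oplus\Id_{k_R}=\Id_{k_R+1}$, this is just $\Id_{k_L}\ominus\Id_{k_R+1}$; concretely, $\sigma$ is a block $A$ of $k_L$ consecutive increasing values $k_R+2,\ldots,n$ followed by a block $B$ of $k_R+1$ consecutive increasing values $1,2,\ldots,k_R+1$. Because $k_L$ determines $k_R$, the whole problem reduces to deciding, for each $k_L\in\{0,1,\ldots,n-1\}$, whether the corresponding permutation is ballot, and then counting those $k_L$. (If one wanted to avoid invoking Theorem~\ref{triple bijection}, the same form can be recovered directly: avoiding $213$ and $321$ forces $\sigma=\sigma_L\,n\,\sigma_R$ with $\sigma_L,\sigma_R$ increasing as in Theorem~\ref{213,321}, and then avoiding $132$ forces every entry of $\sigma_L$ to exceed every entry of $\sigma_R$, which pins down the two blocks.)

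First I would handle the two boundary values of $k_L$. If $k_L=0$ then $\sigma=\Id_n$, which is ballot. If $k_L=1$ then $\sigma=n\,1\,2\cdots(n-1)$, whose first step is a descent, so $\sigma$ is not ballot; this is the one case where block $A$ is a singleton, and it is the subtle exclusion in the count.

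Next, for $k_L\ge 2$ I would show $\sigma$ is automatically ballot. Such a $\sigma$ has exactly one descent, at position $k_L$ (the step from $n$, the last entry of $A$, down to $1$, the first entry of $B$). Every prefix ending inside $A$ is strictly increasing; the prefix of length $k_L+1$ has $k_L-1\ge 1$ ascents and a single descent; and every longer prefix only picks up additional ascents from the increasing block $B$. Hence every prefix has at least as many ascents as descents, so $\sigma\in B_n$.

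Finally I would count: $\sigma$ is ballot exactly when $k_L=0$ or $k_L\ge 2$, i.e.\ for $k_L\in\{0,2,3,\ldots,n-1\}$, and distinct $k_L$ give distinct permutations. This set has $n-1$ elements once $n\ge 2$, giving $a_n=n-1$; for $n=1$ only $k_L=0$ survives, so $a_1=1$, which is why the formula is stated with $n=1$ excluded. I do not expect a real obstacle: the argument is essentially bookkeeping on the ballot condition, and the only points requiring care are the exclusion of $k_L=1$ and the small-$n$ edge case.
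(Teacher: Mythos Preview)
Your proof is correct and follows essentially the same approach as the paper: characterize $\sigma$ as two consecutively increasing blocks, observe there is at most one descent, and then note the ballot condition fails only in the single case where that descent is the very first step. The only cosmetic difference is the parametrization: the paper writes $\sigma=\sigma_L\,n\,\sigma_R$ and indexes by the position of $n$ (excluding position~$1$), whereas you index by the length $k_L$ of the top block (excluding $k_L=1$); these are in obvious bijection.
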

\begin{proof}
Let $\sigma \in B_n(132,213,321)$. Then, because $\sigma$ avoids $132$, $213$, and $321$, then it can be written in the form of $\sigma_L n \sigma_R$, where $\sigma_L n$ is consecutively increasing and $\sigma_R$ is consecutively increasing. Note that there is at most one descent in this permutation, and hence $n$ can be anywhere except the first element for $\sigma$ to be a ballot permutation (in other words, $\sigma_L$ cannot be empty), and hence there are $n-1$ different permutations in $B_n(132,213,321)$.
\end{proof}

\begin{theorem}\label{213,312,321}
Let $a_n = |B_n(213,312,321)|$. Then $a_n = n$.
\end{theorem}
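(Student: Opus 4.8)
The plan is to decouple the ballot condition from pattern avoidance: first determine $S_n(213,312,321)$ exactly, and then check that for $n\ge 3$ every one of those permutations happens to be ballot, so that $B_n(213,312,321)=S_n(213,312,321)$.

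To characterize $S_n(213,312,321)$, I would write a candidate $\sigma$ as $\sigma = \sigma_L\, n\, \sigma_R$. Avoiding $321$ forces $\sigma_R$ to be increasing, since a descent inside $\sigma_R$ together with $n$ is an occurrence of $321$; then avoiding $312$ forces $\sigma_R$ to have at most one element, since any two (necessarily increasing) elements of $\sigma_R$ together with $n$ form a $312$. A parallel observation controls $\sigma_L$: it can contain no inversion at all, because if $\sigma(i)>\sigma(j)$ with $i<j$ and both to the left of $n$, then $\sigma(i)\,\sigma(j)\,n$ is an occurrence of $213$. Hence $\sigma_L$ is increasing, i.e. it is the increasing arrangement of whichever subset of $[n-1]$ it contains. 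This leaves exactly two shapes: $\sigma_R$ empty, which forces $\sigma_L=\Id_{n-1}$ and $\sigma=\Id_n$; or $\sigma_R=(v)$ for a single $v\in[n-1]$, which forces $\sigma_L$ to be the increasing arrangement of $[n-1]\setminus\{v\}$, i.e. $\sigma=\sigma^{(v)}:=12\cdots(v-1)(v+1)\cdots n\, v$, the permutation obtained from $\Id_n$ by moving $v$ to the last spot. A short check — either a case split on which of the two shapes we are in, or a direct inspection of the $3$-element subsequences of $\sigma^{(v)}$ (each is order-isomorphic to $123$, $132$, or $231$) — confirms that all $n$ of these permutations do avoid $213$, $312$, and $321$. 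Thus $|S_n(213,312,321)|=n$.

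To finish, I would note that $\Id_n$ is trivially a ballot permutation, and each $\sigma^{(v)}$ has exactly one descent, located at position $n-1$; every strictly shorter prefix of $\sigma^{(v)}$ is increasing and hence has no descent, while the whole permutation has $n-2$ ascents against its single descent. For $n\ge 3$ we have $n-2\ge 1$, so every prefix of $\sigma^{(v)}$ has at least as many ascents as descents. Therefore $B_n(213,312,321)=S_n(213,312,321)$ for $n\ge 3$, giving $a_n=n$ as claimed. (When $n\le 2$ one instead has $a_n=1$, the discrepancy being that $\sigma^{(1)}=21$ is not ballot when $n=2$.)

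The main obstacle is making the characterization of $S_n(213,312,321)$ fully rigorous — in particular arguing that $\sigma_L$ is forced to be increasing (no inversion can be tucked away there) and that $\sigma_R$ genuinely cannot hold two elements. Once the avoidance class is pinned to these $n$ explicit permutations, the verification of the ballot property is immediate from the fact that each has at most one descent, occurring at the very end.
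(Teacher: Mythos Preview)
Your proposal is correct and follows essentially the same approach as the paper: decompose $\sigma$ as $\sigma_L\, n\, \sigma_R$, argue that $\sigma_L$ is increasing (from avoiding $213$) and that $\sigma_R$ has at most one element (from avoiding $312$ and $321$), and count the resulting $n$ permutations. Your version is in fact a bit more thorough than the paper's, since you explicitly verify that the constructed permutations avoid the three patterns and are ballot, and you correctly flag the small-$n$ caveat (the table in the paper indeed shows $a_1=a_2=1$, so $a_n=n$ only from $n\ge 3$).
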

\begin{proof}
Let $\sigma$ be in $B_n(213,312,321)$. Now $\sigma$ can be written as $\sigma_L n \sigma_R$, where $\sigma_L$ is increasing and $\sigma_R$ is either empty or one element to avoid $312$ and $321$.

When $\sigma_R$ is empty, the identity permutation is the only one that satisfies the above criteria. When $\sigma_R$ is nonempty, we can choose $n-1$ different elements to be the last element. Then all the other elements must go in increasing order in $\sigma_L$, so there are a total of $n$ different permutations in $B_n(213,312,321)$.
\end{proof}

Finally we present a constructive approach to show that ballot permutations avoiding the patterns $231$, $312$, and $321$ follow the Fibonacci sequence with initial terms $a_1 = 1$ and $a_2 = 1$.

\begin{theorem}\label{231,312,321}
Let $a_n = |B_n(231,312,321)|$. Then $a_n$ follows the recurrence relation $a_{n} = a_{n-1} + a_{n-2}$ with the initial terms $a_1 = 1$ and $a_2 = 1$, which is the Fibonacci sequence.
\end{theorem}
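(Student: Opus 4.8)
The plan is to establish the Fibonacci recurrence by decomposing each $\sigma \in B_n(231,312,321)$ according to the position of its largest entry $n$. The first step is to show that $n$ occupies position $n$ or position $n-1$. Writing $\sigma = \sigma_L\, n\, \sigma_R$, if $\sigma_R$ contained at least two entries, then that pair of entries, preceded by $n$, would form an occurrence of $321$ if the pair is decreasing and of $312$ if it is increasing; since $\sigma$ avoids both, $|\sigma_R| \le 1$. This partitions $B_n(231,312,321)$ into two disjoint, exhaustive cases, which I would match with $B_{n-1}(231,312,321)$ and $B_{n-2}(231,312,321)$ respectively.

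In the case where $\sigma_R$ is empty we have $\sigma = \sigma_L\, n$ with $\sigma_L$ a permutation of $\{1,\dots,n-1\}$. Being a prefix of $\sigma$, it is ballot, and since deleting an entry preserves pattern avoidance, $\sigma_L \in B_{n-1}(231,312,321)$. Conversely, appending $n$ to any $\tau \in B_{n-1}(231,312,321)$ produces a ballot permutation — the appended entry creates a final ascent, so no prefix worsens — that still avoids all three patterns: in any occurrence involving $n$, the entry $n$ would have to play the role of the largest letter of the pattern, but in each of $231$, $312$, $321$ the largest letter is not the last one, contradicting that $n$ is last in $\tau\, n$. So this case contributes exactly $a_{n-1}$ permutations.

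In the case $|\sigma_R| = 1$, I would write $\sigma = \sigma_L\, n\, r$. Since the entries are $\{1,\dots,n\}$ and $n$ sits in position $n-1$, we have $r \le n-1$; if $r \le n-2$ then $n-1$ lies somewhere in $\sigma_L$ and $(n-1)\,n\,r$ is an occurrence of $231$, which is forbidden, so $r = n-1$ and $\sigma = \sigma_L\, n\,(n-1)$ with $\sigma_L$ a permutation of $\{1,\dots,n-2\}$. As before $\sigma_L$ is a ballot prefix avoiding the patterns, so $\sigma_L \in B_{n-2}(231,312,321)$; conversely, for $n \ge 3$ any (necessarily nonempty) $\tau \in B_{n-2}(231,312,321)$ yields $\tau\, n\,(n-1) \in B_n(231,312,321)$, since the two appended entries contribute one ascent then one descent so every prefix stays balanced, and the same largest-letter argument — now applied to the two largest values, which occupy the last two positions — rules out any forbidden occurrence. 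So this case contributes $a_{n-2}$ permutations, and combining the two cases gives $a_n = a_{n-1} + a_{n-2}$ for all $n \ge 3$. Finally I would note directly that $B_1(231,312,321) = \{1\}$ and $B_2(231,312,321) = \{12\}$ (the word $21$ opens with a descent, hence is not ballot), so $a_1 = a_2 = 1$ and $(a_n)$ is the Fibonacci sequence.

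The main obstacle, such as it is, lies in the two converse directions: one must check carefully that each constructed extension is simultaneously ballot and an avoider of all three patterns, and that the deduction $r = n-1$ really does follow from $231$-avoidance alone. Everything else is bookkeeping; the only mild subtlety is the off-by-one at $n = 2$, where the second construction would produce the non-ballot word $21$ — this is precisely why the recurrence is asserted only for $n \ge 3$ and why both initial values are supplied explicitly.
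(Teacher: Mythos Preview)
Your proof is correct and rests on the same key observation as the paper's: the largest entry $n$ must sit in one of the last two positions, and if it is second-to-last then $231$-avoidance forces the last entry to be $n-1$. The execution differs in direction, however. The paper argues \emph{forward}: it starts from $\sigma\in B_{n-1}$ or $\tau\in B_{n-2}$, appends $n$ or $n(n-1)$, and then spends most of its effort checking that no other insertion of $n$ (into either a good or a bad permutation of smaller length) can land in $B_n(231,312,321)$. You argue \emph{backward}: you begin with an arbitrary $\sigma\in B_n(231,312,321)$, locate $n$, and peel it (or the block $n(n-1)$) off. Your direction is tidier, since the exhaustiveness of the two cases is automatic from $|\sigma_R|\le 1$ and you never need to rule out insertions into non-ballot or non-avoiding words; the paper's direction makes the generative structure more explicit but at the cost of several extra case checks. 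Both arrive at the same bijections $\sigma_L\leftrightarrow\sigma_L\,n$ and $\sigma_L\leftrightarrow\sigma_L\,n(n-1)$.
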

\begin{proof}
Note that given some $\sigma \in B_{n-1}(231,312,321)$, the permutation $\sigma n$ will be in $B_{n}(231,312,321)$, since inserting $n$ at the end of a permutation that avoids $231,312$, and $321$ will still avoid these three permutations. Moreover, an ascent has been added by inserting $n$ onto the end of $\sigma$, and hence $\sigma n$ will still be a ballot permutation. This case contributes $a_{n-1}$ different elements in $B_{n}(231,312,321)$.

Now also note that given some $\tau \in B_{n-2}(231,312,321)$, the permutation $\tau n (n-1)$ will also be in $B_{n}(231,312,321)$. This still avoids $231$, $312$, and $321$, and we've added an ascent followed by a descent, so $\tau n (n-1)$ is still a ballot permutation. This case contributes $a_{n-2}$ different elements in $B_{n}(231,312,321)$.

Given $\sigma \in B_{n-1}(231,312,321)$, we show that inserting the maximal element $n$ in any other place cannot produce an element in $B_{n}(231,312,321)$. Now if $\sigma$ ends in $n-1$ and we insert $n$ left-adjacent to $n-1$, this case is already accounted for above because this is in the form of $\tau n(n-1)$, where $\tau \in B_{n-2}(231,312,321)$. If $\sigma$ ends in $k<n-1$, then inserting $n$ left-adjacent to $k$ will contain an occurrence of $231$. Inserting $n$ anywhere else will contain either an occurrence of $321$ or $312$, since these cases are disjoint.

For $\sigma \notin B_{n-1}(231,312,321)$, we show that we cannot produce an element in $B_{n}(231,312,321)$ by inserting the maximal element $n$ anywhere. Note that if $\sigma$ contains either $231$, $312$, or $321$, inserting $n$ anywhere will still contain an occurrence of these patterns. Now let $\sigma$ be a non-ballot permutation. Note that we must insert $n$ adjacent to the last element of $\sigma$ or else there is an occurrence of $312$ or $321$. If $n$ is inserted left-adjacent to the last element, then $\sigma$ must be $\sigma_L (n-1)$ to avoid $231$. Then $\sigma_L n (n-1)$ is not a ballot permutation because we've inserted a descent at the end of $\sigma_L (n-1)$. Now if we insert $n$ at the end of $\sigma$, note that $\sigma$ is a prefix of $\sigma n$. And since $\sigma$ is not a ballot permutation,  $\sigma n$ cannot be either. And hence if we insert $n$ anywhere else, we cannot produce an element in $B_{n}(231,312,321)$.

Now let $\tau \in B_{n-2}(231,312,321)$. We show that we cannot produce an element in $B_{n}(231,312,321)$ by inserting the maximal elements $n-1$ and $n$ in any other places. Now note that $\tau (n-1) \in B_{n-1}(231,312,321)$, which is covered by the other case above. Now similar to the reasoning above, we have to insert $n-1$ left-adjacent to the last element of $\tau$. And doing this forces the last element of $\tau$ to be $n-2$ to avoid $231$. So write $\tau$ as $\tau_L (n-2)$ and consider $\tau_L (n-1) (n-2)$. Then note that $\tau_L (n-1) (n-2)n$ is already counted in the case above since $\tau_L (n-1) (n-2) \in B_{n-1}(231,312,321)$. Moreover, $\tau_L (n-1) n (n-2)$ contains $231$, so inserting $n$ and $n-1$ anywhere else in $\tau$ will not produce an element in $B_{n}(231,312,321)$.

For $\tau \notin B_{n-2}(231,312,321)$, we show that we cannot produce an element in $B_{n}(231,312,321)$ by inserting the maximal elements $n$ and $n-1$ anywhere. As discussed above, if $\tau$ contains $231$, $312$, or $321$, then inserting $n$ and $n-1$ anywhere in $\tau$ will still contain these patterns. So assume that $\tau$ is not a ballot permutation. So we must insert $n-1$ either left-adjacent or right-adjacent to the last element of $\tau$. 

Let us consider the case where we insert $n-1$ left-adjacent to the last element of $\tau$. Similarly as above, this forces the last element of $\tau$ to be $n-2$ to avoid $231$. So write $\tau$ as $\tau_L (n-2)$ and consider $\tau_L (n-1) (n-2)$. Now this is simply adding a descent at the end of $\tau_L (n-2)$. Similarly, we must insert $n$ adjacent to $n-2$, and hence $\tau_L (n-1) (n-2)$ is not a ballot permutation. This implies that $\tau_L (n-1) (n-2) n$ is also not a ballot permutation. Moreover, $\tau_L (n-1) n (n-2)$ contains an occurrence of $231$.

Note that if we insert $n-1$ right-adjacent to the last element of $\tau$, then $\tau (n-1)$ is not a ballot permutation because $\tau$ is a prefix of $\tau (n-1)$ and $\tau$ is not a ballot permutation. Hence both $\tau (n-1) n$ and $\tau n (n-1)$ cannot be ballot permutations.

Hence if we insert $n$ and $n-1$ anywhere else, we cannot produce an element in $B_{n}(231,312,321)$.

So we conclude that \begin{align*}
    a_{n} = a_{n-1} + a_{n-2}. & \qedhere
\end{align*}
\end{proof}

\section{Conclusion and Open Problems}

In this paper, we have exhaustively enumerated ballot permutations avoiding two patterns of length $3$ and three patterns of length $3$. The results presented in this paper extend Lin, Wang, and Zhao's \cite{lin2022decomposition} enumeration of permutations avoiding a single pattern of length $3$ and proved Wilf-equivalences of pattern classes. In particular, bijections between ballot permutations avoiding certain patterns and left factors of Dyck paths were also shown. We conclude with the following open problems as proposed by Lin, Wang, and Zhao's \cite{lin2022decomposition}:

\begin{problem}
Can ballot permutations avoiding sets of patterns of length $4$ be enumerated?
\end{problem}

Although this paper has shown connections between ballot permutations avoiding patterns of length $3$ and their recurrence relations and formulas, there are no existing OEIS sequences \cite{oeis} that correspond with the number of ballot permutations avoiding one pattern with length $4$. Moreover, can ballot permutations avoiding consecutive patterns or vincular patterns be enumerated?

And finally, Lin, Wang, and Zhao \cite{lin2022decomposition} have suggested the following notion of a \emph{ballot multipermutation}.
\begin{definition}
For a tuple of natural numbers $\textbf{m} = (m_1, \dots, m_n)$, let $\mathfrak{S}_{\textbf{m}}$ be the set of multipermutations of $\{ 1^{m_1}, 2^{m_2}, \dots, n^{m_n} \}$. An element $\sigma \in \mathfrak{S}$ is a ballot multipermutation if for each $i$ such that $1 \leq i \leq \sum_{k=1}^n m_k$, the following inequality holds: 
$$|\{ j \in [i] : \sigma(j) < \sigma(j+1)  \}| \geq |\{ j \in [i] : \sigma(j) > \sigma(j+1)  \}|.$$
\end{definition}

\begin{problem}
For fixed $\textbf{m}$, is it possible to enumerate ballot multipermutations in $\mathfrak{S}_{\textbf{m}}$? Further, is it possible to enumerate ballot multipermutations avoiding patterns in $S_n$?
\end{problem}

In addition, inspired by Bertrand's \cite{bertrand} ballot problem for $\lambda > 1$, we propose the following problem:

\begin{problem}
Can ballot permutations avoiding a single pattern of length $3$ or pairs of patterns of length $3$ with at least $\lambda$ times as many ascents as descents be enumerated?
\end{problem}

Furthermore, the enumeration of even and odd ballot permutations avoiding small patterns has not been studied and would be a further avenue for future research.

\section*{Acknowledgements}
This research was conducted at the 2022 University of Minnesota Duluth REU and is supported by Jane Street Capital, the NSA (grant number H98230-22-1-0015), the NSF (grant number DMS2052036), and the Harvard College Research Program. The author is indebted to Joe Gallian for his dedication and organizing the University of Minnesota Duluth REU. Lastly, a special thanks to Joe Gallian, Amanda Burcroff, Michael Ren, and Katalin Berlow for their invaluable feedback and advice on this paper.

\newpage
\bibliography{references}
\bibliographystyle{plain}

$\\ \\ \\$
\end{document}